\documentclass[11pt]{article}
\usepackage{amsmath,amssymb,amsthm,graphicx,subfigure,float,url}

\usepackage{tikz}

\usepackage{parskip} 
\usepackage{pdfsync}
\graphicspath{{fig/}}

\usepackage{mathrsfs} 
\usepackage[colorlinks=true]{hyperref} 
\usepackage{pdfsync}

\usepackage{enumitem}
\usepackage{dsfont} 
\usepackage[T1]{fontenc} 

\topmargin -1cm
\textheight21cm
\textwidth15cm 
\oddsidemargin1cm

\def\R{\textrm{I\kern-0.21emR}}
\def\N{\textrm{I\kern-0.21emN}}

\renewcommand{\geq}{\geqslant}
\renewcommand{\leq}{\leqslant}

\renewcommand{\geq}{\geqslant}
\renewcommand{\leq}{\leqslant}

\newcommand{\norm}[1]{\left\lVert#1\right\rVert}
 \def \equi#1{\mathrel{\mathop{\kern 0pt\sim}\limits_{#1}}}

\newcommand {\e}  {\varepsilon}

\newcommand {\Chi} {{\bf \raise 2pt \hbox{$\chi$}} }
\newcommand {\f}   {\frac}
\newcommand {\p}   {\partial}
 

%

%

%
\newcommand{\beq}{\begin{equation}}
\newcommand{\eeq}{\end{equation}}
\newcommand{\bea} {\begin{array}{rl}}
\newcommand{\eea} {\end{array}}
\newcommand{\bepa}{\left\{ \begin{array}{l}}
\newcommand{\eepa} {\end{array}\right.}

\newcommand{\bmu}{\begin{multline}}
\newcommand{\emu}{\end{multline}}

\newtheorem{theorem}{Theorem}  
\newtheorem{proposition}{Proposition}
\newtheorem{corollary}{Corollary}

\newtheorem{lemma}{Lemma}

\theoremstyle{definition}\newtheorem{remark}{Remark}
\title{Phase portrait control for 1D monostable and bistable reaction-diffusion equations}


\author{Camille Pouchol\thanks{\scriptsize Sorbonne Universit\'e, Universit\'e Paris-Diderot SPC, CNRS, Laboratoire Jacques-Louis Lions, Inria MAMBA Team, F-75005, Paris, France (\href{mailto:pouchol@ljll.math.upmc.fr}{pouchol@ljll.math.upmc.fr})} \; \;
Emmanuel Tr\'elat\thanks{\scriptsize Sorbonne Universit\'e, Universit\'e Paris-Diderot SPC, CNRS, Laboratoire Jacques-Louis Lions, Inria CAGE Team, F-75005, Paris, France (\href{mailto:emmanuel.trelat@upmc.fr)}{emmanuel.trelat@upmc.fr}) }\; \;
Enrique Zuazua\thanks{\scriptsize DeustoTech, Fundaci\'on Deusto, Avda Universidades, 24, 48007, Bilbao, Basque Country, Spain; 
\newline
Departamento de Matem\'aticas, Universidad Aut\'onoma de Madrid, 28049 Madrid, Spain;
\newline 
Facultad Ingenier\'ia, Universidad de Deusto, Avda. Universidades, 24, 48007 Bilbao, Basque Country, Spain; 
\newline
Sorbonne Universit\'e, Universit\'e Paris-Diderot SPC, CNRS, Laboratoire Jacques-Louis Lions, F-75005, Paris, France (\href{mailto:enrique.zuazua@deusto.es)}{enrique.zuazua@deusto.es})}}


\begin{document}

\newcounter{assum}

\maketitle

\begin{abstract}
We consider the problem of controlling parabolic semilinear equations arising in population dynamics, either in finite time or infinite time. These are the monostable and bistable equations on $(0,L)$ for a density of individuals $0 \leq y(t,x) \leq 1$, with Dirichlet controls taking their values in $[0,1]$. We prove that the system can never be steered to extinction (steady state $0$)  or invasion (steady state $1$) in finite time, but is asymptotically controllable to $1$ independently of the size $L$, and to $0$ if the length $L$ of the interval domain is less than some threshold value $L^\star$, which can be computed from transcendental integrals. In the bistable case, controlling to the other homogeneous steady state $0 <\theta< 1$ is much more intricate. We rely on a staircase control strategy to prove that $\theta$ can be reached in finite time if and only if $L<L^\star$. The phase plane analysis of those equations is instrumental in the whole process. It allows us to read obstacles to controllability, compute the threshold value for domain size as well as design the path of steady states for the control strategy.
 \end{abstract}

\section{Introduction} 
\label{Section1}
For $L>0$, $0 \leq T \leq +\infty$, we consider the following controlled reaction-diffusion equation on $(0,L) \times (0,T)$
\begin{equation}
\begin{cases}
\label{Model}
y_t  - y_{xx} = f(y), \\
y(t,0) = u(t),  \; y(t,L) = v(t),\\
y(0) = y_0.
\end{cases}
\end{equation}

where $f$ is a $C^1$ nonlinearity satisfying $f(0) = f(1) = 0$, with initial data $0 \leq y_0 \leq 1$ in $L^\infty(0,L)$. The Dirichlet controls $u$ and $v$ are measurable functions satisfying the constraints
\begin{equation}
\label{Constraints}
0\leq u(t)\leq 1,\quad 0\leq v(t)\leq 1.
\end{equation}
In such a setting, \eqref{Model} admits a unique solution in \[L^2((0,T) \times (0,L)) \cap C([0,T];H^{-1}(0,L)),\] see for instance~\cite{Lions1971}. The constraints on the controls entail  \[0 \leq y(t,x) \leq 1,\]
for all $a.e. \,(t,x) \in [0,T] \times [0,L]$, by the parabolic comparison principle~\cite{Protter1967, Ladyzenskaya1968}.
 
We will consider two types of functions.
\begin{enumerate}
\item [(H1)]
\label{Monostable}
The \textit{monostable} case: $f>0$ on $(0,1)$.  In such a case, we will also assume $f'(0)>0$.  The typical example is $f(y) = y(1-y)$.
\item [(H2)]
\label{Bistable}
The \textit{bistable} case: $f<0$ on $(0,\theta)$ and $f>0$ on $(\theta,1)$ where $0< \theta <1$. In such a case, we will also assume $f'(0)<0$ and $f'(1)<0$. The typical example is $f(y) = y(1-y)(y-\theta)$. 
\end{enumerate}
We also set 
\begin{equation}
\label{Primitive}
F(y) := \int_0^y f(z) \, dz \text{ for } y\in[0,1].
\end{equation}
In the case (H2), we will without loss of generality always assume $F(1)\geq 0$, which is equivalent to $\theta \leq \frac{1}{2}$ when $f(y)=y(1-y)(y-\theta)$. If $F(1)<0$, one can set $z=1-y$ to apply the results obtained when $F(1) \geq 0$. 

By means of appropriately chosen Dirichlet controls $u(t)$ and $v(t)$ in $L^\infty(0,T; [0,1])$ at $x=0$ and $x=L$ respectively, our goal is to control the equation towards either the steady states $0$, $1$, or in cases (H2), also towards the steady state $\theta$. 

Let us denote $a$ a generic solution of $f(y) = 0$, namely $a=0$, $a=1$ or also $a= \theta$ in the case (H2). 
Our goal is to provide controls $u$, $v$ steering the system to those homogeneous steady states. We will say that the controlled equation \eqref{Model} is
\begin{itemize} 
\item
\textit{controllable in finite time towards $a$} if for any initial condition $0 \leq y_0 \leq 1$ in $L^\infty(\Omega)$, there exist $0 \leq T < +\infty$, controls $u, \, v \in L^\infty(0,T; [0,1])$ such that $$y(T,\cdot) = a.$$ 
\item 
\textit{controllable in infinite time towards $a$} if for any initial condition $0 \leq y_0 \leq 1$ in $L^\infty(\Omega)$, there exist controls $u, \, v \in L^\infty(0,+\infty; [0,1])$ such that $$y(t,\cdot) \longrightarrow a$$ uniformly in $[0,L]$ as $t$ tends to $+\infty$. 
\end{itemize}

\paragraph{Motivations.} 
These models are ubiquitous in population dynamics (see~\cite{Aronson1978, Kanarek2010, Perthame2015}) but they also appear in other contexts, \text{e.g.} in the theory of combustion. Let us use the point of view of population dynamics to introduce the main modeling aspects. 

In case (H1), having in mind the example $f(y) = y(1-y)$, there is exponential increase of $y$ whenever $y>0$, but there is a saturation effect near $y=1$ because the full capacity of the system has been reached. In case (H2), $f$ takes negative values close to $0$ to model the fact that a minimal density $\theta$ is required for reproduction and cooperation, under which the population will die out. The state $\theta$ is unstable in the absence of diffusion, since $f'(\theta)>0$.

These models are also amenable to modeling invasion phenomena, because (when posed on the whole space)  they typically have solutions called \textit{travelling waves} in the form $y(x-ct)$ for certain speeds $c$, linking the states $0$ and $1$, see the pioneering work~\cite{Kolmogorov1937}.

For such problems, it is thus a requirement for the solution to satisfy $y \geq 0$, a condition which is fulfilled with non-negative Dirichlet boundary conditions. We might consider using controls that are above $1$, taking into account the possibility for releases at $0$ or $L$ to be above the capacity of the system. 

However, there are contexts in which $y$ is the proportion of individuals of type $A$ over the total number of individuals of types $A$ and $B$. This can be obtained as the suitable limit of a system of two reaction-diffusion equations for each type~\cite{Strugarek2016a}. Thus, we shall also impose that the controls are below $1$ to cover these cases, which will not be a restriction for the results. 

In applications, it is common to target extinction or invasion of a given population: the goal is to reach the steady state $0$ or the steady state $1$. Converging to an intermediate steady state such as $\theta$ can also be desirable if the goal is to maintain the population all over the domain, but below invasion levels. 

If one thinks of $y$ as a proportion of one species over the total number of individuals in two species, reaching $\theta$ is one way of ensuring coexistence on the whole domain $(0,L)$. Doing it is a priori a more challenging task than for $0$ and $1$ since $\theta$ is an unstable equilibrium for the dynamical system $y' = f(y)$. 
\paragraph{On the control for model \eqref{Model}.} The literature for the control of semilinear parabolic equations such as \eqref{Model} is abundant, whether it is by means of Dirichlet controls or controls acting inside the domain~\cite{Coron2007, Zuazua2007}. The typical results (for nonlinearities small enough to avoid blow-up) when such controls are unbounded is the possibility to control towards $0$ in any time $T>0$~\cite{Lebeau1995,Emanuilov1995,Fernandez-Cara2000a}, but of course at the expense of controls becoming larger and larger as $T$ becomes smaller~\cite{Fernandez-Cara2000}. 

Much effort has been recently put into studying controllability problems also in the presence of constraints on the controls, because it is a quite common assumption for applications. Such additional control constraints, and in particular non-negativity constraints, may dramatically change the types of results one can obtain~\cite{Pighin2017,Loheac2017}. Controllability to $0$ is no longer granted, and when it is, a minimal time for controllability might appear. Also note that even with unbounded controls, constraints on the state itself can lead to absence of controllability or appearances of minimal times for it to hold true~\cite{Loheac2018}. 

Finally, it is also possible to think of controlling variables that enter nonlinearly in the equation, such as the Allee parameter $\theta$~\cite{Trelat2018}. In those cases, however, the control has a much weaker effect on the equation, thereby weakening the controllability properties.

\paragraph{A simple static strategy.}  
The simplest approach to steering the system to a homogeneous steady state $a$ is by choosing constant controls $u(t) = v(t) = a$, a strategy we shall call \textit{static} in what follows. A crucial result due to Matano is that any trajectory must converge to some stationary state.
\smallskip
\begin{theorem}[\cite{Matano1978}-Theorem B]
\label{Matano}
Consider the equation
\begin{equation}
\begin{cases}
y_t  - y_{xx} = f(y), \\
y(t,0) = \bar{u}, \; y(t,L) =  \bar{v},\\
y(0) = y_0,
\end{cases}
\end{equation}
with some constant controls $0\leq \bar{u}, \bar{v} \leq 1$. Then 
$y(t, \cdot)$ converges uniformly to some \textit{stationary state} as $t\rightarrow +\infty$, \textit{i.e.}, a solution $\bar{y}$ of 
\begin{equation}
\label{Stationary}
\begin{cases}
-\bar{y}_{xx} = f(\bar{y}), \\
\bar{y}(0) = \bar{u},  \; \bar{y}(L) = \bar{v}.\\
\end{cases}
\end{equation}
\end{theorem}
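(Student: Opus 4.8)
The plan is to exploit the gradient (variational) structure of the equation with time-independent Dirichlet data. First I would introduce the energy functional
\[
E(w) = \int_0^L \left( \f{1}{2} w_x^2 - F(w) \right) dx,
\]
defined on the affine subspace of $H^1(0,L)$ functions satisfying $w(0)=\bar u$, $w(L)=\bar v$, where $F$ is the primitive \re{Primitive}. A direct computation along a solution, after integration by parts and using that $y_t$ vanishes on the boundary (the boundary data being constant in time), gives
\[
\ddt E(y(t,\cdot)) = \int_0^L \left( y_x y_{xt} - f(y) y_t \right) dx = -\int_0^L \left( y_{xx} + f(y) \right) y_t \, dx = -\int_0^L y_t^2 \, dx \le 0 .
\]
Hence $E$ is a strict Lyapunov functional. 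Since $0 \le y \le 1$ and $F$ is continuous, $E$ is bounded below along the trajectory, so $t \mapsto E(y(t,\cdot))$ decreases to a finite limit $E_\infty$, and integrating the identity above yields $\int_0^{+\infty} \norm{y_t(t,\cdot)}_{L^2}^2 \, dt < +\infty$.

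Next I would establish compactness of the orbit. Using the uniform bound $0 \le y \le 1$ together with parabolic smoothing estimates, the set $\{ y(t,\cdot) : t \ge 1 \}$ is relatively compact in $C^1([0,L])$. Consequently the $\omega$-limit set $\omega(y_0)$ is nonempty, compact, connected and invariant under the flow. Since $E$ is continuous and $E(y(t,\cdot)) \to E_\infty$, the functional $E$ is constant, equal to $E_\infty$, on $\omega(y_0)$; by the LaSalle invariance principle, an invariant set on which a strict Lyapunov functional is constant forces $y_t \equiv 0$ on any entire trajectory it contains, so every element of $\omega(y_0)$ is a stationary state, i.e. a solution of \re{Stationary}.

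It then remains to prove that $\omega(y_0)$ reduces to a single point, which is the genuine difficulty: a priori the trajectory could rotate indefinitely among a connected continuum of equilibria all sharing the common energy $E_\infty$. When the stationary states at level $E_\infty$ are isolated (the generic situation, readable on the ODE phase plane for $-\bar y_{xx}=f(\bar y)$), connectedness of $\omega(y_0)$ immediately yields a singleton and convergence follows. The hard part is to rule out a nontrivial continuum. Here I would invoke the tool specific to scalar one-dimensional parabolic equations: the Sturmian zero-number (lap-number) of the difference $y(t,\cdot) - \phi$ between the solution and any stationary state $\phi$ is finite for $t>0$ and nonincreasing in $t$, dropping strictly whenever this difference develops a multiple zero. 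Combining the eventual constancy of this zero-number with the connectedness of $\omega(y_0)$ and the gradient structure pins the orbit to a single equilibrium. I expect this last step --- converting the discrete monotonicity of the zero-number into genuine convergence to one stationary state --- to be the main obstacle, and it is precisely the content of Matano's argument.
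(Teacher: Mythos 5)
The first thing to note is that the paper does not prove this statement at all: it is quoted as Theorem B of Matano's 1978 paper, and the authors only remark that it is ``classical but nontrivial'', established via the strong maximum principle and lap-number (sign-change) arguments. So there is no in-paper proof to compare yours against line by line; the relevant comparison is with Matano's own argument, which is precisely the part your proposal leaves out.

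Your reduction is correct as far as it goes: the energy $E$ is indeed a Lyapunov functional for time-independent Dirichlet data (the boundary terms in the integration by parts vanish because $y_t=0$ at $x=0,L$), the orbit is precompact in $C^1([0,L])$ by parabolic smoothing and the uniform bound $0\le y\le 1$, and LaSalle's invariance principle shows that $\omega(y_0)$ is a nonempty, compact, connected set of solutions of \eqref{Stationary}. But all of this is the routine part, and it proves only that $y(t,\cdot)$ approaches the \emph{set} of stationary states. The substance of the theorem is exactly the step you defer: showing that $\omega(y_0)$ is a singleton, i.e., ruling out an orbit that drifts forever along a continuum of equilibria of equal energy (such continua genuinely occur for these nonlinearities, as the phase-plane analysis in the paper makes clear, so the ``generic isolated equilibria'' case cannot be invoked). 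Your sentence claiming that eventual constancy of the zero-number, connectedness of $\omega(y_0)$ and the gradient structure ``pin the orbit to a single equilibrium'' is an assertion, not an argument: monotonicity of the zero-number of $y(t,\cdot)-\phi$ for each fixed equilibrium $\phi$ does not by itself prevent the orbit from approaching different equilibria along different time sequences, and converting it into convergence requires the delicate core of Matano's proof (strict dropping of the zero-number at multiple zeros, applied simultaneously to differences with nearby equilibria), or an alternative such as Zelenyak's Lojasiewicz-type argument. Since you explicitly acknowledge that this step ``is precisely the content of Matano's argument'', your proposal amounts to a correct framing of the problem together with a citation of the theorem's hard core, rather than a proof of it --- which is defensible, given that the paper itself treats the result as a black box, but as a blind proof attempt it leaves the essential gap open.
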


This classical but nontrivial result is established thanks to the strong maximum principle, in the spirit of work that followed studying the number of oscillation points or of sign changes of solutions (lap-number, see~\cite{Matano1982}) as time evolves.

Note that the limit stationary state is not necessarily known: the above result only asserts its existence. Moreover, it is not necessarily unique. As a consequence, choosing $u(t) = v(t) = a$ will work asymptotically (independently of the initial condition) if the homogeneous solution $a$ is the only solution to the above stationary problem \eqref{Stationary} for $\bar{u} = \bar{v} = a$. 

\paragraph{Threshold for domain size and obstacles.} 
Intuitively, one can expect that if $L$ is small, $y=a$ will be the only solution to the previous stationary problem, while if $L$ is large, there might be others. It is indeed well-known that there exists a threshold for $L$ under which $a$ is the only solution, and above which there is at least one other~\cite{Lions1982}.

Let us clarify this point with $a=0$: it is proved in~\cite{Lions1982}, (Theorem $1.4.$ for the monostable case (H1), Theorem $1.5.$ for the bistable case (H2)) that there exists a positive solution $0 < z \leq 1$ to 
\begin{equation*}
\begin{cases}
- z_{xx}  =   \lambda f(z), \\
 z(0) = z(1) = 0,
\end{cases}
\end{equation*}
depending on the position of $\lambda$ with respect to some threshold. This is equivalent to $L$ being above some threshold, since after changing variables through $z(x) = y(\frac{x}{L})$, $L$ and $\lambda$ are related by $\lambda = \frac{1}{L^2}$. We denote this threshold $L^\star$ both in the monostable and bistable cases.

It is easily checked (after change of variables) that this result also applies to prove the existence of a threshold for $\theta$ in the bistable case (H2), which we will denote $L_\theta$ and which satisfies $L_\theta < L^\star$. Indeed, any non-zero solution $y$ to the stationary problem with null Dirichlet boundary conditions will reach its maximum above~$\theta$: at such a maximum point $x_0$, there must hold $f(y(x_0)) = -y_{xx}(x_0) > 0$ and thus $y(x_0)$ is in $(\theta,1)$. As a consequence, this solution will cross $\theta$ at least twice and give rise to a solution of the stationary problem with $\theta$-boundary conditions, on a smaller interval. 
%

We shall see that there also exists a threshold for $a=1$, which is infinite under the hypothesis that $F(1)\geq 0$. To unify statements, this infinite threshold will be denoted by~$L_1$. 

When $\bar{u} = \bar{v} = a$, other stationary solutions $\bar{y}$ to \eqref{Stationary} than $a$ are obstacles for the static control strategy to work, since if $y_0 \geq \bar{y}$ (resp. $y_0 \leq \bar{y}$), then $y(t, \cdot) \geq \bar{y}$ (resp. $y(t, \cdot) \leq \bar{y}$) for the solution of the controlled model with constant controls $u(t) = v(t) = a$. This a again a consequence of the parabolic comparison principle. 

Note that these obstacles also come up naturally for the construction of so-called \textit{bubbles}, \textit{i.e.}, initial conditions in the case (H2) on the whole space, which are big enough to induce invasion~\cite{Strugarek2016, Bliman2017}. 

Consequently, combining Matano's Theorem with this threshold phenomenon already yields that the static strategy with $u(t) = v(t) = a$ is such that (leaving aside the case of $L = L_a$ for the moment):
\begin{itemize}
\item 
for $L<L_a$, any initial condition converges asymptotically to $a$,
\item
for $L > L_a$, there exist some initial conditions for which the solution will not converge to $a$. 
\end{itemize}

\paragraph{Application to invasion and extinction.}  
Another application of the comparison principle shows that it suffices to consider the static strategy in the case of $a=0$ and $a=1$. We take $a=0$ to illustrate the idea. The solution $y$ of the controlled equation of \eqref{Model} is such that $y(t,x) \geq z(t,x)$ where $z$ solves the same equation but with $u(t)=v(t) =0$. Thus a given control strategy will reach $0$ in finite or infinite time if and only if the static strategy does too. 

Also, whenever $y_0 \neq 0$, there holds $z(t,x) > 0$ inside $(0,L)$ for all times, by the strong parabolic maximum principle~\cite{Protter1967}. It entails that when $y_0 \neq 0$, it is possible to reach the state $0$ only asymptotically, and the same holds for $1$. At this stage, for $a=0$ or $a=1$, we can state that the system is not controllable to $a$ in finite time, and that it is controllable in infinite time towards $a$ depending on the position of $L$ with respect to $L_a$. 

\paragraph{Designing strategies for $\theta$.}   \;
The previous reasoning shows that the steady state $\theta$ will asymptotically attract all trajectories if $L$ is small enough, more precisely if $L<L_\theta$, just by the static strategy of putting $u(t) = v(t) =\theta$ on both sides. One can then hope to reach $\theta$ in finite time, by waiting for the system to be close enough to $\theta$ in order to use a local controllability result~\cite{Russell1978, Emanuilov1995, Lebeau1995}. 

Contrarily to the case of $0$ and $1$, the static strategy might be improved for the control towards $\theta$ since controls can take values both above and below $\theta$. 
If either $0$ or $1$ attracts all trajectories, our idea is to try and use a path of steady states linking $\theta$ to $0$ (or $1$), in order to use the \textit{staircase} method inspired by~\cite{Coron2004} and its development in~\cite{Pighin2017}. It allows to steer (in finite time) any steady state to another one, as long as they are linked by a path of steady states.

\paragraph{Main results.} 
In this paper, we provide a complete understanding of controllability properties towards constant steady states for the equation \eqref{Model}, and the central tool is the phase plane analysis for the ODE $-y''=f(y)$.
First, it will provide us with a different approach to establish the existence of thresholds. We shall actually get more precise results by proving that $L_1 = +\infty$ (due to $F(1)\geq 0$ which implies that $1$ has an advantage over $0$), and that $L^\star$ is positive and can be computed explicitly as the infimum of some transcendental integrals. More precisely, we will show that
\begin{itemize}
\item
\eqref{Model} is controllable in infinite time towards $0$ if and only if $L \leq L^\star$ in the case (H1) under generic conditions on $f$, and if and only if $L<L^\star$ in the case (H2),
\item 
\eqref{Model} is controllable in infinite time towards $1$ independently of $L$ in both cases (H1) and (H2). 
\end{itemize}
Recall that, by the strong parabolic maximum principle, controllability to $0$ or $1$ is never possible within finite time.
Furthermore, $L^\star = \pi/\sqrt{f'(0)}$ under generic conditions on $f$ in the case (H1). 
In the case (H2), let us stress that our integral formula for $L^\star$ was established for cubic nonlinearities, already with phase plane analysis in~\cite{Smoller1981}, but for other purposes.

Second, phase plane analysis will also be critical in understanding the controllability properties of $\theta$. We already know from the reasonings above that $\theta$ can be reached asymptotically by the simple static strategy, which works for $L<L_\theta$. The main contribution of our paper is the design of a control strategy which works not only for $L<L_\theta$, but more generally for $L<L^\star$. More precisely, we shall prove in the case (H2) that 
\begin{equation*} 
\text{\eqref{Model} is controllable in finite time towards $\theta$ if and only if $L<L^\star$. }
\end{equation*}
The proof of this equivalence as well as the design of an appropriate control strategy are instrumentally based on the phase plane analysis of the dynamical system $-y''=f(y)$, in the region $0\leq y\leq 1$, which involves the three steady states $0$, $\theta$ and $1$. It might seem surprising that $\theta$ cannot be reached independently of the value of $L$, since controls can take values both below and above it. This is because for $L \geq  L^\star$, a non-trivial solution to the stationary problem with zero Dirichlet boundary conditions is also an obstacle for the control towards $\theta$.

Such a strategy is far from obvious due to the instability of $\theta$ for the corresponding ODE. The main idea is to use the staircase method, together with a fine analysis of the phase plane showing that there is a path of steady states linking $0$ and $\theta$ if and only if $L<L^\star$. Actually, because the controls must be non-negative, $0$ is not an appropriate steady state and we shall need to find, again by phase plane analysis, another globally asymptotically stable steady state $y_{init}$ close to $0$ such that a path of steady states still links $y_{init}$ to $\theta$. We will also explain why there is a minimal time for controllability: one cannot hope to reach $\theta$ in arbitrarily small time, and finally we will prove that among all initial conditions $0 \leq y_0 \leq 1$ in $L^\infty(0,L)$, there exists a uniform time of controllability. 

\paragraph{Outline of the paper.}
The paper is organized as follows. In Section \ref{Section2}, we focus on the case of $0$ and $1$. Phase plane analysis allows us to recover the existence of the threshold $L^\star$ and to find an formula for it, together with some estimates. The problem of controllability towards $\theta$ is investigated in detail in Section \ref{Section3}, where we first recall the staircase method before using it with the help of phase plane analysis. Finally, Section \ref{Section4} is devoted to numerical simulations confirming theoretical results and providing alternatives such as minimal time strategies. It is concluded by some byproducts and perspectives which follow from our work.  

\section{Threshold length $L^\star$ for extinction and invasion}
\label{Section2}

\subsection{A general result for invasion}

We recall that we assume $F(1)\geq 0$ (thus $0$ and $1$ do not play the same role in the bistable case).
\smallskip
\begin{proposition}
\label{ControlOne}
Whether $f$ satisfies (H1) or (H2), \eqref{Model} is controllable in infinite time towards $1$ for all $L>0$, namely $L_1 = +\infty$.
\end{proposition}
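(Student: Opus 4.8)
The plan is to show that the static strategy $u(t)=v(t)=1$ does the job for every $L>0$. Fix any initial datum $0\le y_0\le 1$ and run \eqref{Model} with these constant controls. By Theorem \ref{Matano}, the solution $y(t,\cdot)$ converges uniformly on $[0,L]$ to some stationary state $\bar y$, i.e. a solution of $-\bar y''=f(\bar y)$ on $(0,L)$ with $\bar y(0)=\bar y(L)=1$; moreover $0\le\bar y\le 1$, since the comparison principle forces $0\le y(t,\cdot)\le 1$ at all times. Thus everything reduces to the claim that the only such stationary state is $\bar y\equiv 1$: once this uniqueness is established, convergence holds regardless of $y_0$, which is precisely controllability in infinite time towards $1$, and yields $L_1=+\infty$.

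To prove uniqueness I would use the phase plane of $-y''=f(y)$ through its conserved energy. Multiplying the ODE by $\bar y'$ and integrating gives $\tfrac12(\bar y'(x))^2+F(\bar y(x))=E$ for some constant $E$, with $F$ as in \eqref{Primitive}. Evaluating at $x=0$ yields $E=\tfrac12\bar y'(0)^2+F(1)$. Since $\bar y\le 1$ and $\bar y(0)=1$, necessarily $\bar y'(0)\le 0$; and $\bar y'(0)=0$ would force $\bar y\equiv 1$ by uniqueness for the Cauchy problem at the equilibrium $(1,0)$ of the associated first-order system. So it suffices to rule out $\bar y'(0)<0$.

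Suppose $\bar y'(0)<0$. Then $\bar y$ drops strictly below $1$ right after $x=0$, and since $\bar y(L)=1$ again, $\bar y$ attains its minimum over $[0,L]$ at some interior point $x_0\in(0,L)$, with $\bar y(x_0)\in[0,1)$ and $\bar y'(x_0)=0$. Energy conservation then gives $F(\bar y(x_0))=E=\tfrac12\bar y'(0)^2+F(1)>F(1)$. This is the crux, and here is where the hypothesis $F(1)\ge 0$ enters: because $F(0)=0\le F(1)$, and $F$ is increasing in the monostable case (resp. decreases on $(0,\theta)$ and increases on $(\theta,1)$ in the bistable case), the maximum of $F$ over $[0,1]$ equals $F(1)$, so $F(s)\le F(1)$ for every $s\in[0,1]$. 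Applying this with $s=\bar y(x_0)$ contradicts $F(\bar y(x_0))>F(1)$. Hence no non-trivial stationary state exists and $\bar y\equiv 1$.

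I expect the only delicate point to be this energy step in the bistable case, where $F$ is non-monotone and dips below zero on $(0,\theta)$: one must check that $F(1)$ is genuinely the global maximum of $F$ on $[0,1]$, which is exactly what $F(1)\ge 0$ guarantees, and is the structural reason why $1$ \emph{has an advantage over} $0$ and the threshold $L_1$ is infinite (in sharp contrast with the finite threshold $L^\star$ for extinction). The degenerate case $F(1)=0$ needs only one remark: then $F<0$ on $(0,1)$ with $F(0)=F(1)=0$, so the inequality $F(s)\le F(1)=0$ still holds on $[0,1]$ and the strict gap $F(\bar y(x_0))=E>F(1)$ again produces the contradiction, leaving the argument unchanged.
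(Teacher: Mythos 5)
Your proposal is correct and follows essentially the same route as the paper: reduce via Matano's theorem and the comparison principle to uniqueness of the stationary state with boundary values $1$, then use conservation of the energy $\tfrac12 (w')^2+F(w)$ together with the fact that $F(1)\geq 0$ makes $F(1)$ the maximum of $F$ on $[0,1]$, and Cauchy--Lipschitz uniqueness at the equilibrium $(1,0)$. The only (harmless) difference is organizational: you dispatch the case $\bar y'(0)=0$ by Cauchy--Lipschitz up front, so that the remaining case gives the strict inequality $F(\bar y(x_0))>F(1)$ and treats $F(1)>0$ and $F(1)=0$ uniformly, whereas the paper derives the weak inequality first and invokes Cauchy--Lipschitz only in the degenerate case $F(1)=0$.
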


\begin{proof}
As explained in the introduction, Matano's Theorem \ref{Matano} and the parabolic comparison principle combined imply that \eqref{Model} is controllable towards $1$ in infinite time if and only if the only solution to 
\begin{equation}
\begin{cases}
- w_{xx} = f(w), \\
w(0) = 1,  \; w(L) = 1,\\
\end{cases}
\end{equation}
with $0 \leq w \leq 1$ on $[0,L]$, is the constant $1$. 
The equation $ - w_{xx} = f(w)$ is a second-order ODE which can be rewritten as $w_x = z, \, z_x = -f(w)$,
and there is a solution to the previous equation if and only if there are curves $(w(x),w'(x))$ in the phase portrait $(w,w')$ starting and ending on the axis $w=1$, which satisfy $0 \leq w \leq 1$. In both cases, (H1) and (H2), the only such a  curve is the trivial one: $w \equiv 1$. \par
For completeness, we give an analytical proof. Assume there is a such a function $0 \leq w \leq 1$ which is not identically $1$. Then there is $x_0 \in (0,L)$ such that $w$ reaches its minimum, satisfying $w(x_0) < 1$. Since $w'(x_0) = 0$, the conservation of the energy $\frac{1}{2} w'^2 + F(w)$ yields $\frac{1}{2} (w'(0))^2 + F(1) = F(w(x_0))$ which implies $F(w(x_0)) \geq F(1)$. If $f$ satisfies (H1) or (H2) with $F(1)>0$, the last inequality imposes $w(x_0) = 1$, a contradiction. If $f$ satisfies (H2) together with $F(1) = 0$, then $w'(0) = 0$. Then $w$ would solve the second-order ODE $-w_{xx} = f(w)$ with $w(0) = 1$, $w'(0) = 0$, meaning that $w$ would be identically $1$ by Cauchy-Lipschitz uniqueness, a contradiction. 
\end{proof}

\begin{remark}
\label{Lyapunov}
In the case (H1), a Lyapunov functional exists and can be used to prove convergence to $1$~\cite{Pouchol2018}. Indeed, consider the solution to
\begin{equation*}
\begin{cases}
y_t - y_{xx} = f(y), \\
y(t,0) = 1,  \; y(t,L) = 1,\\
\end{cases}
\end{equation*}
and, for $t>0$, the functional $V(t):= \int_0^L \big(y(t,x) -1 - \ln(y(t,x))\big) \, dx$. Then
$$ \frac{dV}{dt} = - \int_0^L \left(\frac{y_x(t,x)}{y(t,x)}\right)^2  dx - \int_0^L f(y(t,x)) \frac{1-y(t,x)}{y(t,x)}  \,dx \leq 0.$$
Up to our knowledge, however, no such Lyapunov functional has been exhibited in the case~(H2). 
\end{remark}

\subsection{A general result for extinction}
Let us first note that in the case (H2) and if $F(1) = 0$, the argument given for the state $1$ in the previous section works similarly for $0$, because the phase plane shows that $0$ is the only solution to 
\begin{equation}
\begin{cases}
- w_{xx} = f(w), \\
w(0) = 0,  \; w(L) = 0,\\
\end{cases}
\end{equation}
Thus, $F(1)=0$ is a particular case for which \eqref{Model} is controllable in infinite time towards $0$ regardless of $L$. We now assume $F(1)>0$ for the rest of this section. 

Let us introduce some notations. In what follows, we will need to invert the function $F$. \par
In the case (H1), $F$ is increasing, and thus its inverse $F^{-1}$ is well-defined, mapping $[0,F(1)]$ onto $[0,1]$. \par 
In the case (H2) and if $F(1)>0$, $F$ decreases from $0$ to $F(\theta)$, and then increases from $F(\theta)$ to $F(1)>0$. There is thus a unique $\theta_1 \in (\theta,1)$ such that $F(\theta_1)=0$. We choose to denote $F^{-1}$ the inverse of $F$ on $[\theta_1,1]$ which maps $[0,F(1)]$ onto $[\theta_1,1]$. If $F(1) = 0$, we set $\theta_1 = 1$.
\smallskip
\begin{proposition}
\label{FormulaThreshold}
In cases (H1) and (H2), there exists $L^\star$ such that 
\begin{itemize}
\item
if $L < L^\star$, \eqref{Model} is controllable towards $0$ in infinite time,
\item 
if $L > L^\star$, \eqref{Model} is not controllable towards $0$ in infinite time.
\end{itemize}
Furthermore, 
\begin{equation}
\label{Formula}
L^\star  =  \inf_{\alpha \in (0, F(1))} \sqrt{2} \int_0^{F^{-1}(\alpha)} \frac{dy}{\sqrt{\alpha-F(y)}}.
\end{equation}
\end{proposition}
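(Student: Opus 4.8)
The plan is to reduce the controllability question to a purely ODE phase-plane count of stationary solutions, exactly in the spirit of Proposition~\ref{ControlOne}. By Matano's Theorem~\ref{Matano} together with the parabolic comparison principle (the solution with controls $u=v=0$ being the pointwise smallest admissible trajectory, as explained in the introduction), \eqref{Model} is controllable towards $0$ in infinite time if and only if the only solution $0\le w\le 1$ of
\begin{equation*}
-w_{xx}=f(w),\qquad w(0)=w(L)=0
\end{equation*}
is $w\equiv 0$. Everything therefore reduces to deciding, as a function of $L$, whether this boundary value problem admits a nontrivial solution.

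First I would analyze the structure of any nontrivial solution through the conserved energy $E=\tfrac12(w')^2+F(w)$. Using the strong maximum principle and Cauchy--Lipschitz uniqueness I would show that $w>0$ on $(0,L)$ (an interior zero forces $w\equiv 0$) and that $E>0$. Reading the level set $\{E=\alpha\}$ in the phase plane $(w,w')$, I would argue that every interior critical point is a nondegenerate maximum of the same height $w_{\max}=F^{-1}(\alpha)$: in case (H1) because $F$ is increasing, and in case (H2) because $F(w)=\alpha>0$ forces $w\in(\theta_1,1)$, where $f>0$. Consequently $w$ is a single symmetric bump. Separating variables on the increasing branch, $w'=\sqrt{2(\alpha-F(w))}$, and doubling by symmetry, the length of the domain carrying such a solution is exactly
\begin{equation*}
L(\alpha)=\sqrt{2}\int_0^{F^{-1}(\alpha)}\frac{dy}{\sqrt{\alpha-F(y)}},\qquad \alpha\in(0,F(1)).
\end{equation*}
Thus a nontrivial stationary solution exists for a given $L$ if and only if $L$ lies in the range of $\alpha\mapsto L(\alpha)$, and it is natural to set $L^\star:=\inf_{\alpha\in(0,F(1))}L(\alpha)$, which is precisely formula~\eqref{Formula}.

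It then remains to convert this range condition into the two one-sided statements. For $L<L^\star$ no admissible $\alpha$ can give $L(\alpha)=L$, so no nontrivial solution exists and controllability holds. For $L>L^\star$ I would show that $L$ lies in the range: the map $L(\cdot)$ is continuous on $(0,F(1))$, its infimum is $L^\star$, and it blows up at the right endpoint, $L(\alpha)\to+\infty$ as $\alpha\to F(1)^-$, because $w_{\max}\to 1$, an equilibrium of the ODE, so the transit time diverges (generically $F(1)-F(w)\sim\tfrac12|f'(1)|(1-w)^2$, giving a logarithmic divergence). By the intermediate value theorem the image of $L(\cdot)$ therefore covers all of $(L^\star,+\infty)$, producing a nontrivial stationary solution and hence non-controllability. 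The boundary case $L=L^\star$ is deliberately set aside here.

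The main obstacle is the singular-integral asymptotics of $L(\alpha)$ at the endpoints, which is also where the (H1)/(H2) dichotomy surfaces. As $\alpha\to 0^+$ one expands $F(y)\sim\tfrac12 f'(0)y^2$: in case (H1), where $f'(0)>0$, the rescaling $y=F^{-1}(\alpha)s$ gives the finite limit $L(\alpha)\to\pi/\sqrt{f'(0)}$, the first Dirichlet eigenvalue length of the linearization, which (generically) is the infimum and is \emph{not} attained, explaining why controllability persists up to and including $L^\star$ in (H1). In case (H2), where $f'(0)<0$, the same expansion makes the integrand behave like $\big(\tfrac12|f'(0)|\big)^{-1/2}|y|^{-1}$ near $0$, so that $L(\alpha)\to+\infty$ as $\alpha\to 0^+$ as well; since $L(\cdot)$ diverges at both endpoints, its infimum $L^\star$ is attained at some interior $\alpha^\star$, a nontrivial solution exists exactly at $L=L^\star$, and this is why the (H2) threshold is strict. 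I would finally record that $L^\star>0$, which follows from these endpoint asymptotics together with the continuity of $L(\cdot)$.
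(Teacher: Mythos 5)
Your proposal is correct and follows essentially the same route as the paper: reduction via Matano's theorem and the comparison principle to counting nontrivial solutions of $-w_{xx}=f(w)$ with zero Dirichlet data, the energy/phase-plane derivation of $L(\alpha)=\sqrt{2}\int_0^{F^{-1}(\alpha)}dy/\sqrt{\alpha-F(y)}$, the definition $L^\star=\inf_\alpha L(\alpha)$, and the intermediate value argument using $L(\alpha)\to+\infty$ as $\alpha\to F(1)^-$ (from $F(1)-F(z)\sim -\tfrac{f'(1)}{2}(1-z)^2$) to handle $L>L^\star$. The extra material you include on the $\alpha\to 0^+$ asymptotics and the (H1)/(H2) dichotomy is not needed for this proposition and corresponds to the paper's subsequent results (Propositions \ref{LengthMonostable} and \ref{LengthBistable}).
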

It also holds that $L^\star>0$, which we shall prove in the next subsection as a byproduct of Proposition \eqref{PropEstimate}. What happens if $L = L^\star$, will also be clarified in the next subsection, depending on whether $f$ is of monostable type (H1) or bistable type (H2).
\begin{proof}
We know that \eqref{Model} is controllable towards $0$ in infinite time if and only if the only solution to 
\begin{equation}
\begin{cases}
- w_{xx} = f(w), \\
w(0) = 0,  \; w(L) = 0,\\
\end{cases}
\end{equation}
with $0 \leq w \leq 1$ on $[0,L]$, is the constant $0$. 
There is a non-zero solution to the previous equation if and only if there are curves $(w(x),w'(x))$ in the phase portrait $(w,w')$ starting and ending on the $w'$-axis having length $L$ exactly, with the starting and ending points different from the origin. \par
Let us parametrize such curves by their starting point $\left(0, \sqrt{2 \alpha}\right)$ where $\alpha \in (0, F(1)]$. If these curves end on the $w'$-axis, the end-point is $\left(0, -\sqrt{2 \alpha}\right)$ and we denote $L(\alpha)$ the time required for them to reach this end-point. By symmetry, this is also twice the time for this trajectory to reach the $w$-axis, at a point which we denote $y^{max}(\alpha)$. To illustrate these curves, we refer to Figure \ref{PhasePortraitMonostable} for a schematic view of the phase portrait, given in the case~(H1). 

\begin{figure}[h!]
\begin{center}
\includegraphics[scale=0.9]{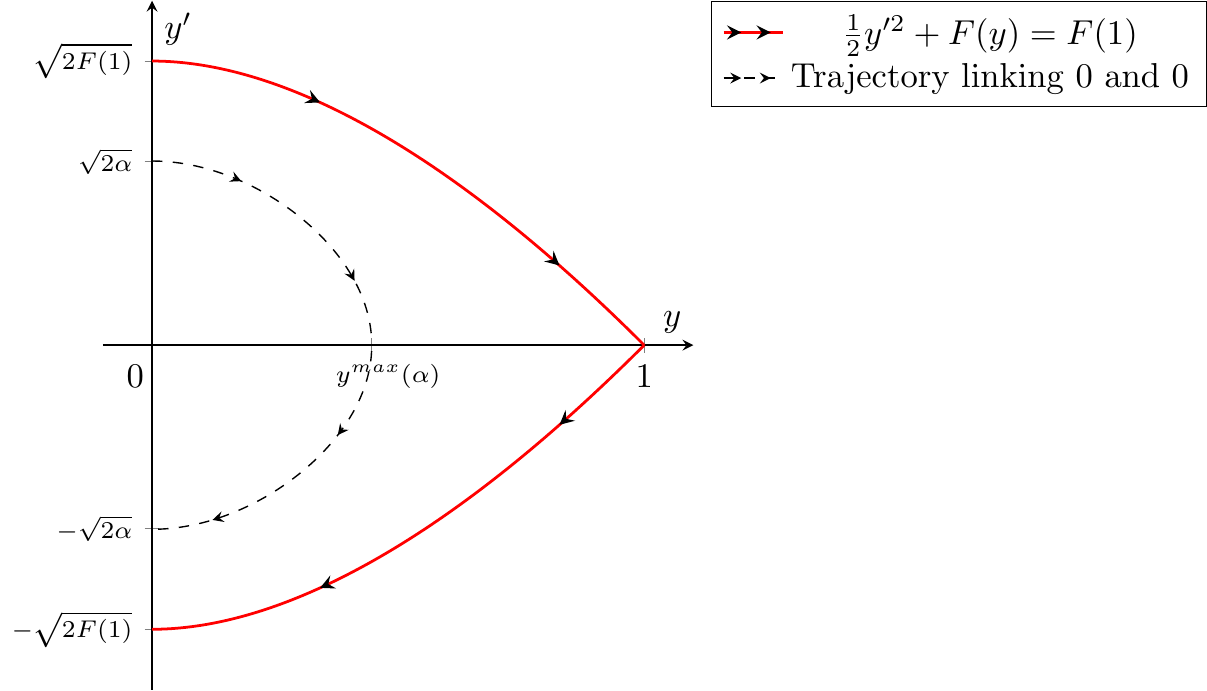}
\end{center}
\caption{Phase portrait in the monostable case (H1) (here $f(y) = y(1-y)$), with the trajectory of energy $\f12 y'^2 + F(y) =F(1)$ and an example of trajectory parametrized with~$\alpha$. }
\label{PhasePortraitMonostable}
\end{figure}

Finally, we use the fact that $y$ increases from $0$ to $y^{max}(\alpha)$, which makes of $y$ a $C^1$-diffeomorphism from $\left[0, \frac{1}{2}L(\alpha)\right]$ onto $\left[0, y^{max}(\alpha)\right]$, allowing us to compute 
$$L(\alpha) = 2 \int_0^{\frac{L(\alpha)}{2}} dz = 2 \int_0^{y^{max}(\alpha)} \frac{dy}{y'}.$$
The energy $\frac{1}{2} (y')^2 + F(y)$ is conserved along trajectories, so that $F(y^{max}(\alpha)) = \alpha$, and inverting this yields $y^{max}(\alpha) = F^{-1}(\alpha)$. We also have $y' =\sqrt{2}\sqrt{\alpha - F(y)}$, and we arrive at 
$$L(\alpha) =  \sqrt{2} \int_0^{F^{-1}(\alpha)} \frac{dy}{\sqrt{\alpha-F(y)}}.$$ 
It is easy to check that this integral is finite, except, as we will see, for $\alpha = F(1)$. Thus, $L^\star$ is well-defined. 
From this formula, one clearly infers that if $L < L^\star$, there is no curve other than $0$ linking two points on the $w'$-axis such that the corresponding trajectory satisfies $0 \leq w \leq 1$. Thus, if $L < L^\star$,  \eqref{Model} is controllable towards $0$ in infinite time. \par
To prove the second point, we compute $L(F(1)) = \sqrt{2} \int_0^1 \frac{dy}{\sqrt{F(1)-F(y)}} = +\infty$ because $F(1) - F(z) \equi{z \rightarrow 1} -\frac{f'(1)}{2} (1-z)^2$, since $F'(1) = f(1) = 0$. Consequently, $L(F(1)) = +\infty$ leading to $L(\alpha) \rightarrow + \infty$ as $\alpha$ tends to $F(1)$. Owing to the continuity of $\alpha \mapsto L(\alpha)$, this implies the existence of a non-zero stationary solution to \eqref{Stationary} (with $\bar{u}=\bar{v}=0$) and equivalently, the non-controllability of \eqref{Model} towards $0$ in infinite time, as soon as $L > L^\star$.
\end{proof}

\begin{remark}
Since $\alpha \mapsto y^{max}(\alpha)$ is increasing with $\alpha$, we can instead parametrize by $\beta:= y^{max}(\alpha)$ leading to the alternative formulae 
$$L^\star = \inf_{\beta \in (0, 1)} \sqrt{2} \int_0^{\beta} \frac{dy}{\sqrt{F(\beta)-F(y)}}$$ 
and 
$$L^\star = \inf_{\beta \in (\theta_1, 1)} \sqrt{2} \int_0^{\beta} \frac{dy}{\sqrt{F(\beta)-F(y)}},$$ 
in cases (H1) and (H2) respectively. 
\end{remark}

\subsection{Estimating $L^\star$}
\label{SubsectionEst}
Let us start by giving a global bound for $L^\star$, valid both in cases (H1) and (H2). 
\smallskip
\begin{proposition} 
\label{PropEstimate}
It holds that \begin{equation}
\label{Estimate}
L^\star \geq \frac{\pi} {\sqrt{\max_{y \in[0,1]} \frac{f(y)}{y}}}. 
\end{equation}
\end{proposition}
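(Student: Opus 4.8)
The plan is to bound the integrand in the explicit formula for $L^\star$ from below by comparing $f$ with its best linear majorant through the origin. First I would set $M := \max_{y\in[0,1]} \frac{f(y)}{y}$; since $f$ is $C^1$ with $f(0)=0$, the quotient $\frac{f(y)}{y}$ extends continuously to $[0,1]$ (with value $f'(0)$ at $y=0$), so $M$ is attained and finite. It is moreover strictly positive because $f>0$ somewhere in $(0,1)$ in both cases, so $\frac{\pi}{\sqrt{M}}$ is well-defined. By definition of $M$ one has the pointwise inequality $f(s)\leq Ms$ for all $s\in[0,1]$, which is the only property of $f$ I will use.

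I would work with the $\beta$-parametrized formula from the Remark following Proposition \ref{FormulaThreshold}, namely $L^\star=\inf_\beta \sqrt2\int_0^\beta \frac{dy}{\sqrt{F(\beta)-F(y)}}$, which has the same form in cases (H1) and (H2) (only the range of $\beta$ differs). Writing $F(\beta)-F(y)=\int_y^\beta f(s)\,ds$ and inserting $f(s)\leq Ms$ gives
$$F(\beta)-F(y)\leq M\int_y^\beta s\,ds=\frac{M}{2}\,(\beta^2-y^2),$$
hence $\frac{1}{\sqrt{F(\beta)-F(y)}}\geq \sqrt{\frac{2}{M}}\,\frac{1}{\sqrt{\beta^2-y^2}}$. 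This is the crux of the argument: it amounts to replacing the nonlinearity by the harmonic oscillator $-y''=My$, whose trajectories sweep the relevant quarter-arc in time $\frac{\pi}{2\sqrt{M}}$.

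Plugging this bound into the integral, for each admissible $\beta$ I would obtain
$$L(\beta)=\sqrt2\int_0^\beta \frac{dy}{\sqrt{F(\beta)-F(y)}}\geq \sqrt2\,\sqrt{\frac{2}{M}}\int_0^\beta \frac{dy}{\sqrt{\beta^2-y^2}}=\frac{2}{\sqrt M}\cdot\frac{\pi}{2}=\frac{\pi}{\sqrt M},$$
using $\int_0^\beta \frac{dy}{\sqrt{\beta^2-y^2}}=\arcsin(y/\beta)\big|_0^\beta=\frac{\pi}{2}$. Since the right-hand side is independent of $\beta$, taking the infimum over $\beta$ yields $L^\star\geq \frac{\pi}{\sqrt M}$, which is the claim. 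As an immediate byproduct this gives $L^\star>0$ (the positivity announced right after the statement), since $M<+\infty$.

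I do not expect a serious obstacle: the proof reduces to a one-line comparison inequality followed by an elementary integral. The only points needing care are (i) checking that $M$ is well-defined and strictly positive in both settings---in (H2) this relies on $f>0$ on $(\theta,1)$, since near $0$ one has $f'(0)<0$---and (ii) applying the comparison on the correct parametrization so that the lower bound is uniform in $\beta$ and survives the infimum. I would also note in passing that when $\frac{f(y)}{y}$ is maximized at $y=0$ (e.g.\ $f(y)=y(1-y)$, where $M=f'(0)=1$) the estimate is sharp, consistent with the value $L^\star=\pi/\sqrt{f'(0)}$ announced for case (H1).
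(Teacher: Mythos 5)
Your proof is correct, but it follows a genuinely different route from the paper's. You work directly with the phase-plane formula $L^\star=\inf_\beta \sqrt2\int_0^\beta dy/\sqrt{F(\beta)-F(y)}$ and insert the pointwise bound $f(s)\leq Ms$, $M=\max_{[0,1]}f(y)/y$, to get $F(\beta)-F(y)\leq \tfrac{M}{2}(\beta^2-y^2)$ and hence $L(\beta)\geq \pi/\sqrt M$ uniformly in $\beta$; in effect you compare the nonlinear pendulum with the harmonic oscillator $-y''=My$, whose quarter-period is $\pi/(2\sqrt M)$, and the estimate survives the infimum because the lower bound is $\beta$-independent. All the steps check out: $M$ is finite, positive and attained in both (H1) and (H2) (in (H2) positivity comes from $f>0$ on $(\theta,1)$, not from $f'(0)$), the inequality $f(s)\leq Ms$ holds even where $f<0$, and the sign $F(\beta)-F(y)>0$ needed to invert the inequality is guaranteed on the admissible range of $\beta$ (all of $(0,1)$ in (H1), $(\theta_1,1)$ in (H2)). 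The paper instead argues on the parabolic side: with $R=M$, any solution with zero Dirichlet data is a subsolution of the linear equation $z_t-z_{xx}=Rz$, and the spectral estimate $\|z(t,\cdot)\|_{L^2}\leq \sqrt L\, e^{-\lambda_1 t}$ with $\lambda_1=\pi^2/L^2-R$ forces convergence to $0$ whenever $L<\pi/\sqrt R$, which yields the same bound on the threshold. The trade-off is instructive: your argument is more elementary and entirely self-contained once the formula of Proposition \ref{FormulaThreshold} is available (no comparison principle or eigenfunction expansion), and it makes the sharpness in the concave monostable case transparent; the paper's argument does not rely on the one-dimensional phase-plane formula at all, so it extends to higher dimensions (as the paper later exploits), and it gives the additional quantitative information of an exponential decay rate $\lambda_1$, which the paper reuses when discussing how convergence rates scale with the diffusivity $\mu$.
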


\begin{proof}
Let $0 \leq y_0 \leq 1$ be given in $L^\infty(0,L)$ and consider the solution to \eqref{Model} with null boundary Dirichlet values.
For $R:= \max_{y \in[0,1]}\left( \frac{f(y)}{y}\right)$, we can bound
$$y_t - y_{xx} = f(y) =\left( \frac{f(y)}{y}\right) y \leq R y\text{ on } (0,L).$$ Subsequently, $y$ is a subsolution of the equation 
\begin{equation}
\begin{cases}
z_t  - z_{xx} = Rz,  \\
z(t,0) = 0,  \; z(t,L) = 0,\\
z(0) = y_0.
\end{cases}
\end{equation}
From the comparison principle for parabolic equations, we deduce $y(t,x)\leq z(t,x)$. Now, using the Hilbertian basis of $L^2(0,L)$ formed by eigenvectors of the operator $A:z \mapsto -z_{xx} -Rz$ with Dirichlet boundary conditions, it is standard that $$\|z(t,\cdot)\|_{L^2(0,L)} \leq \|y_0\|_{L^2(0,L)} e^{-\lambda_1 t}\leq \sqrt{L} \, e^{-\lambda_1 t}$$  where $\lambda_1$ is the first eigenvalue of $A$, given by $\lambda_1 := - R + \frac{\pi^2}{L^2}$. Thus it is clear that $z(t,\cdot) \rightarrow 0$ independently of $y_0$ as soon as $L< \frac{\pi} {\sqrt{R}}$. 
\end{proof}

In the case (H1), it is possible to obtain the actual value of $L^\star$ under an additional sufficient condition on the function $f$.
\smallskip
\begin{proposition}
\label{LengthMonostable}
Let $f$ satisfy (H1) be a $C^2$ function. Further assume 
\begin{equation}
\label{HypMonostable}
f^2 \geq 2 F f' \text{ on } [0,1].
\end{equation}
Then $$L^\star = \frac{\pi}{\sqrt{f'(0)}}.$$
\end{proposition}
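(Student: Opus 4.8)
The plan is to study the \emph{time map}
$L(\beta) := \sqrt{2}\int_0^\beta \frac{dy}{\sqrt{F(\beta)-F(y)}}$, so that by the alternative expression for $L^\star$ recorded in the Remark above, $L^\star = \inf_{\beta\in(0,1)} L(\beta)$. I would prove that hypothesis \eqref{HypMonostable} forces $\beta\mapsto L(\beta)$ to be non-decreasing on $(0,1)$; since one also has $L(\beta)\to \pi/\sqrt{f'(0)}$ as $\beta\to 0^+$, the infimum is then attained in the limit $\beta\to 0^+$ and equals $\pi/\sqrt{f'(0)}$, which is the claim. Note that the limit alone already gives $L^\star\leq \pi/\sqrt{f'(0)}$ unconditionally, so the role of \eqref{HypMonostable} is to supply the matching lower bound through monotonicity.

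To make the monotonicity tractable, first I would remove the $\beta$-dependence from the (singular) upper endpoint. In case (H1), $F$ is an increasing $C^1$-diffeomorphism of $[0,1]$ onto $[0,F(1)]$, so the substitution $u=F(y)$ together with $A:=F(\beta)$ turns the integral into the Abel-type form $L = \sqrt{2}\int_0^A \frac{\phi(u)}{\sqrt{A-u}}\,du$ with $\phi(u):=1/f(F^{-1}(u))$; a further rescaling $u=At$ yields $L = \sqrt{2A}\int_0^1 \frac{\phi(At)}{\sqrt{1-t}}\,dt$, where now the only singularity sits at the \emph{fixed} endpoint $t=1$ and is integrable. Differentiating under the integral sign then gives $\frac{dL}{dA} = \frac{1}{\sqrt{2A}}\int_0^1 \frac{\psi(At)}{\sqrt{1-t}}\,dt$ with $\psi(u):=\phi(u)+2u\phi'(u)$.

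The key algebraic point is that $\psi$ is controlled exactly by \eqref{HypMonostable}: writing $y=F^{-1}(u)$ one computes $\phi'(u)=-f'(y)/f(y)^3$, hence $\psi(u) = \frac{1}{f(y)}-\frac{2F(y)f'(y)}{f(y)^3} = \frac{f(y)^2-2F(y)f'(y)}{f(y)^3}$. Since $f>0$ on $(0,1)$, the sign of $\psi$ is that of $f^2-2Ff'$, which is $\geq 0$ by assumption; therefore $dL/dA\geq 0$ and $L$ is non-decreasing in $\beta$. Finally I would compute the limit: using $F(y)\equi{y\to 0}\frac12 f'(0)y^2$ one gets $\sqrt{2A}\,\phi(At)\to (f'(0)t)^{-1/2}$, so that $L(\beta)\to \frac{1}{\sqrt{f'(0)}}\int_0^1 \frac{dt}{\sqrt{t(1-t)}} = \frac{\pi}{\sqrt{f'(0)}}$.

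The main obstacle is analytic rather than conceptual: justifying the differentiation under the integral sign and the passage to the limit, since $\phi(u)\sim (2f'(0)u)^{-1/2}$ blows up as $u\to 0$ while the factor $(1-t)^{-1/2}$ is singular at $t=1$. This requires producing integrable dominating functions uniform in $A$ on compact subintervals of $(0,F(1))$, which is where the extra smoothness $f\in C^2$ is used, to control $\phi'$ and the behaviour of $\psi$ near the origin. Once these routine estimates are in place, the monotonicity of $L$ together with the computed limit combine to give $L^\star=\pi/\sqrt{f'(0)}$.
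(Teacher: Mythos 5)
Your proposal is correct and follows essentially the same route as the paper: the substitution $u=F(y)$, $u=At$ is exactly the paper's change of variables (written there as $L(\alpha)=\sqrt{2\alpha}\int_0^1 (F^{-1})'(\alpha u)(1-u)^{-1/2}\,du$, with your $\phi=(F^{-1})'$ and your $A$ equal to the paper's energy parameter $\alpha$), the monotonicity reduces to the sign of $\phi+2u\phi'=\bigl(f^2-2Ff'\bigr)/f^3$ just as in the paper, and the limit computation as $\beta\to 0^+$ is identical. Your only addition is making explicit the domination estimates needed to differentiate under the integral sign and to pass to the limit, a point the paper leaves implicit.
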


The hypothesis clearly applies to concave functions, whence the following corollary.
\smallskip
\begin{corollary}
Let $f$ satisfy (H1). If $f$ is concave on $[0,1]$, then $L^\star = \frac{\pi}{\sqrt{f'(0)}}.$
In particular, if $f(y) = y(1-y)$, then $L^\star = \pi$.
\end{corollary}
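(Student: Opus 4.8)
The plan is to work directly with the time‑map characterization of $L^\star$. Using the parametrization by $\beta = y^{max}(\alpha)\in(0,1)$ from the preceding remark, set
$$L(\beta) := \sqrt{2}\int_0^{\beta} \frac{dy}{\sqrt{F(\beta)-F(y)}}, \qquad L^\star = \inf_{\beta \in (0,1)} L(\beta),$$
and establish the two facts $L(\beta) \geq \pi/\sqrt{f'(0)}$ for every $\beta$, and $\lim_{\beta\to 0^+} L(\beta) = \pi/\sqrt{f'(0)}$; together they force the infimum to equal $\pi/\sqrt{f'(0)}$. The key reformulation is that hypothesis \eqref{HypMonostable} is precisely the monotonicity of $\phi := F/f^2$: since $f>0$ on $(0,1)$ one computes $\phi' = (f^2 - 2Ff')/f^3$, so $f^2 \geq 2Ff'$ is equivalent to $\phi$ being non-decreasing on $(0,1)$.

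I would first turn this monotonicity into a pointwise bound. From $F(y)\sim \frac12 f'(0) y^2$ and $f(y)\sim f'(0) y$ as $y\to 0$ one gets $\phi(0^+) = 1/(2f'(0))$, whence $\phi(y) \geq 1/(2f'(0))$ on $(0,1)$, i.e.
$$f(y) \leq \sqrt{2 f'(0)\, F(y)}, \qquad y \in [0,1].$$
For the lower bound I would then substitute $\sigma = F(y)$, legitimate because $F$ is an increasing $C^1$-diffeomorphism in case (H1); writing $A := F(\beta)$ and $d\sigma = f(y)\,dy$ gives $L(\beta) = \sqrt{2}\int_0^{A} \frac{d\sigma}{f(y(\sigma))\sqrt{A-\sigma}}$. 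Replacing $f(y(\sigma))$ by its upper bound $\sqrt{2f'(0)\,\sigma}$ only enlarges the integrand, so
$$L(\beta) \geq \frac{1}{\sqrt{f'(0)}} \int_0^{A} \frac{d\sigma}{\sqrt{\sigma(A-\sigma)}} = \frac{\pi}{\sqrt{f'(0)}},$$
the integral being the Beta integral $B(\tfrac12,\tfrac12)=\pi$, independent of $A$. Hence $L^\star \geq \pi/\sqrt{f'(0)}$.

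For the matching upper bound I would compute the limit $\beta\to 0^+$ after rescaling $y = \beta t$, which gives $L(\beta) = \sqrt{2}\int_0^1 \beta\,(F(\beta)-F(\beta t))^{-1/2}\, dt$ with $(F(\beta)-F(\beta t))/\beta^2 \to \frac12 f'(0)(1-t^2)$ pointwise; passing to the limit produces $\sqrt{2}\int_0^1 (\frac12 f'(0)(1-t^2))^{-1/2}\,dt = \pi/\sqrt{f'(0)}$, so $L^\star \leq \pi/\sqrt{f'(0)}$ since the infimum is bounded by any limit value.

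The two points requiring care are the derivation of $f \leq \sqrt{2f'(0)F}$ — which is exactly where \eqref{HypMonostable} enters, through the monotonicity of $F/f^2$ and the correct evaluation of its limit at $0$ — and the justification of the limit passage in the upper bound, where a dominated-convergence argument must control the integrable singularity of $(1-t^2)^{-1/2}$ near $t=1$ uniformly in small $\beta$. As an alternative route to the lower bound, combining $f \leq \sqrt{2f'(0)F}$ with $(\sqrt{F})' = f/(2\sqrt{F}) \leq \sqrt{f'(0)/2}$ yields $F(y) \leq \frac12 f'(0) y^2$ and hence $f(y)/y \leq f'(0)$, so that $\max_{[0,1]} f(y)/y = f'(0)$ and Proposition \ref{PropEstimate} gives $L^\star \geq \pi/\sqrt{f'(0)}$ at once.
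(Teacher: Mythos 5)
Your proof is correct, and it takes a genuinely different route from the paper's. The paper obtains the corollary from Proposition \ref{LengthMonostable}: writing $L(\alpha)=\sqrt{2\alpha}\int_0^1 (F^{-1})'(\alpha u)(1-u)^{-1/2}\,du$, it uses \eqref{HypMonostable} to prove that $\alpha\mapsto L(\alpha)$ is \emph{increasing} on $(0,F(1))$ (differentiation under the integral sign), so that $L^\star=\lim_{\alpha\to 0^+}L(\alpha)$, and then computes this limit. You never differentiate the time map: you prove the pointwise bound $L(\beta)\geq\pi/\sqrt{f'(0)}$ by converting \eqref{HypMonostable} into monotonicity of $F/f^2$, hence $f\leq\sqrt{2f'(0)F}$, and reducing to the Beta integral $\int_0^A\sigma^{-1/2}(A-\sigma)^{-1/2}\,d\sigma=\pi$; the matching upper bound is the same boundary-limit computation as in the paper (and your dominated-convergence worry is easily settled: concavity gives $F(\beta)-F(\beta t)\geq\tfrac12\beta f(\beta)(1-t^2)$, an integrable minorant uniform in small $\beta$). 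Your route is more elementary and in fact covers the corollary as literally stated: it needs only $f\in C^1$ concave, whereas Proposition \ref{LengthMonostable} formally assumes $f\in C^2$. Your closing alternative is shorter still: concavity gives $f(y)\leq f'(0)y$, so $\max_{[0,1]}f(y)/y=f'(0)$ and Proposition \ref{PropEstimate} yields the lower bound at once. What the paper's heavier argument buys is the monotonicity of $L(\alpha)$ itself, which it reuses immediately afterwards: monotonicity shows the infimum is approached only as $\alpha\to 0$, i.e.\ there is no stationary obstacle at $L=L^\star$, whence controllability for the closed inequality $L\leq L^\star$ in the corollary that follows; your bound $L(\beta)\geq\pi/\sqrt{f'(0)}$ alone does not exclude that the infimum is attained. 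Two minor blemishes to fix: the phrase ``only enlarges the integrand'' is backwards --- replacing $f(y(\sigma))$ in the denominator by its upper bound \emph{decreases} the integrand, which is precisely why the resulting integral is a lower bound, so your displayed inequality is right but the wording is inverted; and since the statement assumes concavity rather than \eqref{HypMonostable}, you should record why concavity gives what you use (immediate: $\bigl(2f'(0)F-f^2\bigr)'=2f\bigl(f'(0)-f'\bigr)\geq 0$ because $f'\leq f'(0)$, which delivers $f\leq\sqrt{2f'(0)F}$ directly).
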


\begin{proof}[Proof (of Proposition \ref{LengthMonostable})]
Let us first prove that $\alpha \mapsto L(\alpha)$ is increasing on $(0,F(1))$. 
We first change variables by setting $u = \alpha F^{-1}(y)$, yielding $L( \alpha) = \sqrt{2 \alpha} \int_0^1 \frac{(F^{-1})'(\alpha u)}{\sqrt{1-u}} \, du$.
Now we compute the derivative of the previous expression for $\alpha \in (0,F(1))$ 
\begin{align*}L'( \alpha) & = \frac{1}{\sqrt{2 \alpha}} \int_0^1 \frac{(F^{-1})'(\alpha u)}{\sqrt{1-u}} \, du + \sqrt{2 \alpha} \int_0^1 \frac{u (F^{-1})^{(2)}(\alpha u)}{\sqrt{1-u}} \, dz\\
& = \frac{1}{\sqrt{2 \alpha}} \int_0^1 \frac{(F^{-1})'(\alpha u) + 2 (\alpha u) (F^{-1})^{(2)}(\alpha u)  }{\sqrt{1-u}} \, du.
\end{align*}
If $F^{-1}(z) + 2z (F^{-1})^{(2)}(z)\geq 0$ for all $z \in (0,F(1))$ our claim is proved. Computing the derivatives, we find 
$$(F^{-1})'(z) +2 z (F^{-1})^{(2)}(z) = \frac{1}{f(F^{-1}(z))}\left(1-2 z \frac{f'(F^{-1}(z))}{\left(f(F^{-1}(z))\right)^2} \right). $$
Changing variables again through $z=F(y)$, the last quantity is non-negative on $(0,F(1))$ if and only if $1 - 2 F(y) \frac{f'(y)}{f^2(z)} \geq 0$ on $(0,1)$, which is exactly the hypothesis \eqref{HypMonostable}.  \par

At this stage, we can claim that 
$$L^\star = \lim_{\alpha \rightarrow 0} L(\alpha),$$ and it remains to compute the limit. Recall that $$L(\alpha) = \sqrt{2 \alpha} \int_0^1 \frac{(F^{-1})'(\alpha u)}{\sqrt{1-u}} \, du=\sqrt{2 \alpha} \int_0^1 \frac{1}{\sqrt{1-u}} \frac{1}{f(F^{-1}(\alpha u))}\, du.$$
Since $F(y)\equi{y \rightarrow 0} \frac{F^{(2)} (0)}{2}  y^2 =\frac{f'(0)}{2}  y^2$, $F^{-1}(z) \equi{z \rightarrow 0} \sqrt{ \frac{2 z}{f'(0)} }$.
As a consequence, $f(F^{-1}(z)) \equi{z \rightarrow 0} f'(0) F^{-1}(z)  \equi{z \rightarrow 0} \sqrt{ 2 f'(0) z}$. 
Finally, we arrive at $ \frac{1}{f(F^{-1}(\alpha u))} \equi{\alpha \rightarrow  0} \frac{1}{\sqrt{2 \alpha f'(0) u }}$ leading to 
$$L(\alpha)  \equi{ \alpha \rightarrow  0}  \frac{1}{\sqrt{f'(0)}} \int_0^1 \frac{1}{\sqrt {u(1-u)}} \,du = \frac{\pi}{\sqrt{f'(0)}},$$ whence the result.
\end{proof}

From the previous result, we know we see that the infimum is attained at the boundary of $(0,F(1))$, thus we know what happens for $L = L^\star$: there is no obstacle to the convergence to $0$.
\smallskip
\begin{corollary}
Let $f$ satisfy (H1) and \eqref{HypMonostable}. Then
\eqref{Model} is controllable towards $0$ in infinite time if and only if $L \leq L^\star$.
\end{corollary}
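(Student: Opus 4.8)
The plan is to sharpen the strict dichotomy of Proposition \ref{FormulaThreshold} (controllable for $L<L^\star$, not controllable for $L>L^\star$) by settling the single remaining case $L=L^\star$, reading everything off the monotonicity of the time map $L(\alpha)$ established in Proposition \ref{LengthMonostable}. First I would recall the characterization used throughout Section \ref{Section2}: by Matano's Theorem \ref{Matano} together with the comparison principle, \eqref{Model} is controllable towards $0$ in infinite time if and only if the constant $0$ is the only solution of the stationary problem \eqref{Stationary} with $\bar u=\bar v=0$ satisfying $0\le w\le 1$. In the phase plane, a nontrivial such solution is exactly a trajectory leaving and returning to the $w'$-axis inside $\{0\le w\le 1\}$, i.e. it exists if and only if $L=L(\alpha)$ for some $\alpha\in(0,F(1))$. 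Hence non-controllability towards $0$ is equivalent to $L$ lying in the range of $\alpha\mapsto L(\alpha)$ over $(0,F(1))$.

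Next I would invoke Proposition \ref{LengthMonostable} and its proof: under (H1) and \eqref{HypMonostable}, the map $\alpha\mapsto L(\alpha)$ is increasing and continuous on $(0,F(1))$, with $\lim_{\alpha\to 0^+}L(\alpha)=L^\star=\pi/\sqrt{f'(0)}$ and, from the computation $L(F(1))=+\infty$ in the proof of Proposition \ref{FormulaThreshold}, $\lim_{\alpha\to F(1)^-}L(\alpha)=+\infty$. By the intermediate value theorem these two limits pin down the range of $L$ as exactly the open half-line $(L^\star,+\infty)$. Combining with the previous paragraph, a nontrivial bounded stationary solution exists if and only if $L>L^\star$; equivalently, for every $L\le L^\star$ the constant $0$ is the unique bounded stationary solution and \eqref{Model} is controllable towards $0$ in infinite time, while for $L>L^\star$ it is not. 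This is precisely the claimed equivalence, now sharp at the endpoint.

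The delicate point, and the only content beyond Proposition \ref{FormulaThreshold}, is the endpoint $L=L^\star$: I must check that $L^\star$ is not attained, i.e. that $L(\alpha)>L^\star$ for every $\alpha\in(0,F(1))$, and not merely $L(\alpha)\ge L^\star$. Since $L$ is nondecreasing with infimum equal to its left limit $L^\star$, the value $L^\star$ is reached at some interior $\alpha_0$ if and only if $L$ is constant on $(0,\alpha_0]$, which by the integral formula for $L'(\alpha)$ in the proof of Proposition \ref{LengthMonostable} forces \eqref{HypMonostable} to degenerate to equality $f^2=2Ff'$ on the whole subinterval $(0,F^{-1}(\alpha_0))$ (and in that case a genuine obstacle $w=A\sin(\sqrt{f'(0)}\,x)$ of length exactly $L^\star$ would survive). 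Excluding this plateau at the left endpoint is exactly what yields strict monotonicity and guarantees that the infimum is attained only in the limit $\alpha\to 0^+$. I expect this strictness, which is what separates "$L<L^\star$" from "$L\le L^\star$", to be the main obstacle; once it is secured, the result is a direct assembly of the already-established characterization, the monotonicity, and the two limit computations.
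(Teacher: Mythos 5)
Your skeleton is the same as the paper's: combine the Matano/comparison characterization and the time-map formula behind Proposition \ref{FormulaThreshold} with the monotonicity of $\alpha\mapsto L(\alpha)$ from Proposition \ref{LengthMonostable}, and settle the endpoint $L=L^\star$ by arguing that the infimum in \eqref{Formula} is approached only in the limit $\alpha\to 0^+$ (this is exactly what the paper asserts when it says the infimum is ``attained at the boundary''). You correctly isolate the crux: since \eqref{HypMonostable} is a non-strict inequality, the proof of Proposition \ref{LengthMonostable} only yields $L'(\alpha)\geq 0$, and the corollary needs the strict statement $L(\alpha)>L^\star$ for every $\alpha\in(0,F(1))$. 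But your proof stops there: ``once it is secured, the result is a direct assembly'' is a declaration, not an argument. Since this strictness is the \emph{only} content of the corollary beyond Proposition \ref{FormulaThreshold}, the proposal is incomplete precisely at the one step that matters.

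Moreover, this gap cannot be closed under the stated hypotheses, as your own parenthetical almost shows. Hypothesis \eqref{HypMonostable} allows equality $f^2=2Ff'$ on an interval $[0,\delta]$, which (solving the ODE $(F')^2=2FF''$ with $F(0)=0$) means exactly $f(y)=f'(0)\,y$ there; and such an $f$ can be completed to a $C^2$ \emph{concave} function satisfying (H1), e.g.\ $f(y)=y$ on $[0,1/4]$ extended concavely so that $f>0$ on $(0,1)$ and $f(1)=0$ --- a case even covered by the paper's concavity corollary. For this $f$, the linear oscillator's half-period is amplitude-independent, so $L(\alpha)=\pi/\sqrt{f'(0)}=L^\star$ for all small $\alpha$: the plateau occurs, and $w(x)=A\sin\bigl(\sqrt{f'(0)}\,x\bigr)$ with $0<A\leq\delta$ is a nontrivial stationary solution with zero Dirichlet data of length exactly $L^\star$. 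By the comparison principle and Theorem \ref{Matano}, any $y_0\geq w$ then stays above $w$ for all admissible controls, so \eqref{Model} is \emph{not} controllable towards $0$ when $L=L^\star$; in this degenerate case the correct dichotomy is ``if and only if $L<L^\star$'', and the corollary as stated fails. So what you have found is not a proof but a genuine edge case, which is also glossed over by the paper's one-line justification: the argument closes only under a nondegeneracy assumption --- e.g.\ that $f$ is not linear on any interval $[0,\delta]$, equivalently that \eqref{HypMonostable} is strict at points arbitrarily close to $0$ --- under which your plateau analysis does force $L(\alpha)>L^\star$ for all $\alpha\in(0,F(1))$ and the rest of your assembly (continuity, the two limits, and the intermediate value theorem) goes through verbatim.
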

\smallskip
\begin{remark}
Note that from the previous results, we see that exponential convergence to~$0$ is often granted, and concavity assumptions for $f$ near $0$ are critical: in such a setting, the linear part dominates and linear stability results can provide global stability.
\end{remark}

We now turn our attention towards the case (H2), for which there is no simple formula. When $F(1)=0$, we set $L^\star = +\infty$ in agreement with the result of \ref{ControlOne} which applies to $0$ when $F(1)=0$. In other words, \eqref{Model} is controllable in infinite time towards $0$ when $F(1)=0$, whatever the value of $L$. 
\smallskip 
\begin{proposition}
\label{LengthBistable}
Let $f$ satisfy (H2) and $F(1) > 0$.
Then $\alpha \mapsto L(\alpha)$ reaches a minimum at some point of~$(0,F(1)).$
\end{proposition}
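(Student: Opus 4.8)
The plan is to study the time map $L(\alpha)=\sqrt2\int_0^{F^{-1}(\alpha)}\f{dy}{\sqrt{\alpha-F(y)}}$ from Proposition~\ref{FormulaThreshold} and to prove three facts: that $\alpha\mapsto L(\alpha)$ is continuous on $(0,F(1))$, and that $L(\alpha)\to+\infty$ both as $\alpha\to 0^+$ and as $\alpha\to F(1)^-$. Granting these, $L$ is a continuous function on the open interval $(0,F(1))$ which tends to $+\infty$ at both ends; it is therefore bounded below away from the endpoints and attains its infimum on a compact subinterval, giving a minimum at an interior point, which is the assertion.

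For the right endpoint I would argue exactly as in the proof of Proposition~\ref{FormulaThreshold}. Writing $L(\alpha)=\sqrt2\int_0^1\f{\ind{\{y<F^{-1}(\alpha)\}}}{\sqrt{\alpha-F(y)}}\,dy$ and letting $\alpha\uparrow F(1)$ (so that $F^{-1}(\alpha)\uparrow 1$), Fatou's lemma yields $\liminf_{\alpha\to F(1)^-}L(\alpha)\geq\sqrt2\int_0^1\f{dy}{\sqrt{F(1)-F(y)}}$. The latter integral is $+\infty$, because $f(1)=0$ and $f'(1)<0$ give $F(1)-F(y)\equi{y\to1}-\f{f'(1)}{2}(1-y)^2$, hence a non-integrable $1/(1-y)$ singularity at $y=1$. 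This forces $L(\alpha)\to+\infty$.

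The left endpoint is the crux, and it is here that the bistable hypothesis $f'(0)<0$ is decisive (in the monostable case $f'(0)>0$ gives instead the finite limit $\pi/\sqrt{f'(0)}$). As $\alpha\to0^+$ one has $F^{-1}(\alpha)\to\theta_1>\theta$, so for $\alpha$ small the domain of integration contains a fixed interval $[0,y_0]$ with $0<y_0<\theta$. Since $F(y)=\f{f'(0)}{2}y^2+o(y^2)$ with $f'(0)<0$, after shrinking $y_0$ we may fix $C>0$ with $-F(y)\leq Cy^2$ on $[0,y_0]$, whence $\alpha-F(y)\leq\alpha+Cy^2$ there and
\[
L(\alpha)\geq\sqrt2\int_0^{y_0}\f{dy}{\sqrt{\alpha+Cy^2}}=\sqrt{\f{2}{C}}\,\mathrm{arcsinh}\!\left(\sqrt{\f{C}{\alpha}}\,y_0\right).
\]
The right-hand side tends to $+\infty$ as $\alpha\to0^+$, which proves the claim.

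It remains to establish continuity of $L$ on $(0,F(1))$, which I view as the main technical obstacle, because of the square-root singularity sitting at the moving endpoint $y=F^{-1}(\alpha)$. On a fixed compact $[\alpha_1,\alpha_2]\subset(0,F(1))$ I would fix $\theta'\in(\theta,\theta_1)$ and split $L(\alpha)=\sqrt2\,(I_1(\alpha)+I_2(\alpha))$ with $I_1=\int_0^{\theta'}$ and $I_2=\int_{\theta'}^{F^{-1}(\alpha)}$. On $[0,\theta']$ one has $F\leq0$, so the integrand of $I_1$ is continuous in $\alpha$ and bounded by $1/\sqrt{\alpha_1}$, and dominated convergence gives continuity of $I_1$. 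For $I_2$, the substitution $s=F(y)$ is licit because $F$ is an increasing diffeomorphism on $[\theta',1]$ and $F^{-1}(\alpha)\leq F^{-1}(\alpha_2)<1$ keeps us away from the zero $f(1)=0$; following it by $\alpha-s=\rho^2$ desingularizes the endpoint and turns $I_2$ into $2\int_0^{\sqrt{\alpha-F(\theta')}}(F^{-1})'(\alpha-\rho^2)\,d\rho$, an integral with continuous, bounded integrand and continuously varying upper limit, hence continuous in $\alpha$. This supplies the three ingredients, and with them the proof.
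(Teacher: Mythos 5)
Your proposal is correct, and its skeleton coincides with the paper's: show that $L$ is continuous on $(0,F(1))$ and blows up at both endpoints, so that the infimum is attained at an interior point. The execution differs in each ingredient. For the blow-up as $\alpha\to 0^+$ --- the only genuinely new point of this proposition, and the one the paper's proof is devoted to --- the paper splits the integral at $\theta_1$ and uses the monostable change of variables on $[\theta_1,F^{-1}(\alpha)]$ to write $L(\alpha)/\sqrt{2}=\int_0^{\theta_1}\frac{dy}{\sqrt{\alpha-F(y)}}+\sqrt{\alpha}\int_0^1\frac{(F^{-1})'(\alpha u)}{\sqrt{1-u}}\,du$, then lets $\alpha\to 0$: the second term vanishes while the first increases to $\int_0^{\theta_1}\frac{dy}{\sqrt{-F(y)}}=+\infty$. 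You instead restrict the integral to a fixed interval $[0,y_0]$ (available precisely because $F^{-1}(\alpha)\geq\theta_1$, the bistable feature) and compare with the explicit integral $\int_0^{y_0}\frac{dy}{\sqrt{\alpha+Cy^2}}$; both arguments rest on the same quadratic vanishing $-F(y)\sim\frac{|f'(0)|}{2}y^2$, but yours yields a quantitative logarithmic divergence rate. The other two ingredients are left implicit in the paper: the divergence at $F(1)^-$ is inherited from Proposition~\ref{FormulaThreshold}, whose passage from $L(F(1))=+\infty$ to $L(\alpha)\to+\infty$ is exactly what your Fatou argument makes rigorous, and the continuity of $L$ is merely asserted there, whereas your splitting at $\theta'$ combined with the desingularizing substitution $\alpha-s=\rho^2$ actually proves it (with the harmless understanding that $F^{-1}$ there denotes the inverse of $F$ restricted to $[\theta',1]$, extending the paper's $F^{-1}$ to $[F(\theta'),F(1)]$). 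In short: same strategy, but your version is self-contained and supplies the justifications the paper's terse proof takes for granted.
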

\begin{proof}
We define $g(\alpha) := \frac{1}{\sqrt{2}} L( \alpha)= \int_0^{F^{-1}(\alpha)} \frac{dy}{\sqrt{\alpha-F(y)}}$ to get rid of the constant, split the integral in two on the intervals $[0,\theta_1]$ and $[\theta_1, F^{-1}(\alpha)]$, and change variables in the second integral as in the monostable case to uncover
$$
g(\alpha) = \int_0^{\theta_1} \frac{dy}{\sqrt{\alpha-F(y)}} + \sqrt{\alpha} \int_0^1 \frac{(F^{-1})'(\alpha u)}{\sqrt{1-u}}  \, du.
$$
We first note that the second integral converges to $0$ when $\alpha$ tends to $0$, while the first one converges to $\int_0^{\theta_1} \frac{dy}{\sqrt{-F(y)}} = + \infty$ because $F(y)  \equi{y \rightarrow  0} \frac{f'(0)}{2} y^2$.  
Thus, the infimum of $L(\alpha) = \sqrt{2} g(\alpha)$ is a minimum, reached inside $(0,F(1))$.
\end{proof}

%
%
%

From the previous result, we know what happens for $L = L^\star$: there is an obstacle to the convergence to $0$.
\smallskip
\begin{corollary}
Let $f$ satisfy (H2).
Then \eqref{Model} is controllable towards $0$ in infinite time if and only if $L < L^\star$.
\end{corollary}

\section{Controlling towards $\theta$ in the bistable case}
\label{Section3}
In this Section, we assume the function $f$ to be of type (H2). First recall the simple static strategy to try and reach $\theta$, which consists in setting $\theta$ on the boundary. This strategy is successful if and only if $L$ is below the threshold $L_\theta$, and as such works only for smaller domains when compared with the one we are about to introduce for $L < L^\star$ (recall that $L_\theta$ < $L^\star$).

\textbf{Estimates for $L_\theta$.} \;
Let us start by explaining how one can obtain a formula and some estimates on $L_\theta$, thanks to the results established in the previous Section \ref{Section2}. We are looking for solutions $0 \leq w \leq 1$, $w \neq \theta$ to the stationary problem  
\begin{equation}
\begin{cases}
\label{StationaryTheta}
- w_{xx} = f(w), \\
w(0) =  \theta ,  \; w(L) = \theta,\\
\end{cases}
\end{equation}
and we can look only for solutions satisfying either $w \geq \theta$ or $w \leq \theta$ when estimating the threshold $L_\theta$, since any solution taking values both above and below $\theta$, will yield solutions to \eqref{StationaryTheta} on a smaller interval. 
After change of variables $z=w-\theta$ (resp. $z = \theta -w$), we are faced with $-z_{xx} = -f(\theta-z)$ (resp. $-z_{xx} = f(\theta+z)$) with null Dirichlet boundary conditions. Thus, using the results established previously, we have the following formula for $L_\theta$: 
\[L_\theta = \inf_{\beta \in (0,1)} \sqrt{2} \left| \int_\theta^{\beta} \frac{dy}{\sqrt{F(\beta)-F(y)}} \right|. \]

It is possible to go further in estimating $L_\theta$. If we proceed as in Proposition \ref{PropEstimate}, we find \[L_\theta \geq \frac{\pi} {\sqrt{\max_{y \in[0,1]} \left| \frac{f(y-\theta)}{y-\theta}\right|}}.\]

Finally, as in Lemma \ref{LengthMonostable}, we can prove that if $f$ is of class $C^2$ satisfying $f^2(y) \geq 2 (F(y) - F(\theta)) f'(y)$ on $[0,1]$, then $L_\theta = \pi /\sqrt{f'(\theta)}$. Note that this condition requires, when $F(1) \geq 0$, that $f$ be concave near $\theta$, which is never the case for the classical cubic nonlinearity $f(y) = y(1-y)(y-\theta)$.


\paragraph{Existence of a minimal time for controllability.} 
Before proving controllability towards $\theta$ for $L< L^\star$ in finite time, a simple argument suffices to explain why it is not possible to steer the system to $\theta$ in arbitrarily small time. For simplicity, assume that $y_0 \in C([0,L])$, and first that $y_0$ is strictly above $\theta$ at least at one point in space inside $\Omega$. As usual, whatever the controls, we can write $y(t, \cdot) \geq z(t,\cdot)$, where $z(t, \cdot)$ starts from $y_0$ but with zero Dirichlet boundary conditions. 

Since the trajectory $z(t, \cdot)$ is smooth in time, it requires a positive time $t_1$ to be uniformly below $\theta$, and so if there exists a time $T$ and a control strategy such that $y(T,\cdot)  =\theta$, there must hold that $T \geq t_1$. If $y_0$ is below $\theta$ somewhere inside $(0,L)$, we argue similarly by comparing to the trajectory associated with Dirichlet boundary controls equal to $1$. 

\subsection{Control along a path of steady states}

We will say that a steady state $\bar{y}$ associated with static controls $\bar{u}$, $\bar{v}$ is \textit{admissible} if $$0 < \bar{u}, \bar{v} < 1.$$ This property will be of great importance because we shall need to make small variations around the controls $\bar{u}$, $\bar{v}$ when making use of the staircase method.

Finally, we will say that there exists a \textit{path of steady states} linking two steady states $ \bar{y}_0$ and $ \bar{y}_1$ if there is a set of steady states $\mathcal{S}$ and a continuous mapping $$\gamma : [0,1] \longmapsto \mathcal{S}$$ such that $\gamma(0) =  \bar{y}_0, \, \gamma(1) =  \bar{y}_1$, where $\mathcal{S}$ is endowed with the $C([0,L])$-topology. The corresponding 1-parameter family of controls will be denoted by $(\bar{u}_s, \bar{v}_s)_{0 \leq s \leq 1}$.


We start by giving a local exact controllability result, which holds uniformly given a family of steady states and rests on the local controllability for a single steady state, well known in the 1D case~\cite{Russell1978} and since then generalized~\cite{Emanuilov1995, Lebeau1995}, see for example~\cite{Pighin2017} for a full derivation. We stress that the controls provided by this result do not necessarily lie in $[0,1]$. We also emphasize that such a uniform result is possible because, by definition, steady states are taken to be between $0$ and $1$.
\smallskip
\begin{lemma}
\label{LocalControllability}
Let $\mathcal{S}$ be a set of steady states $\bar{y}$ associated with controls $\bar{u}, \bar{v}$. Let $T>0$ be fixed. Then there exist constants $C(T)>0$, $\delta(T)>0$ such that for all $\bar{y} \in \mathcal{S}$, for all $0 \leq y_0 \leq 1$ in $L^\infty(0,L)$ with
$$ \norm{ y_0 - \bar{y} }_\infty \leq \delta(T),$$ 
there exist controls $u,v \in L^\infty(0,T; \mathbb{R})$ such that the solution of \eqref{Model} starting at $y_0$ satisfies $$y(T, \cdot)  = \bar{y}.$$ Furthermore, $$\max\left( |u(t) - \bar{u}|, |v(t) - \bar{v}|\right) \leq C(T) \norm{ y_0 - \bar{y} }_\infty \text{ on } (0,T).$$ 
\end{lemma}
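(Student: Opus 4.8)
The plan is to obtain the uniform local controllability by first recalling the single-steady-state result and then showing that the constants $C$ and $\delta$ can be chosen independently of $\bar{y}\in\mathcal{S}$. For a fixed steady state $\bar{y}$ with controls $\bar{u},\bar{v}$, I would write $y=\bar{y}+w$, so that $w$ satisfies a linear-plus-nonlinear parabolic equation
\begin{equation*}
w_t - w_{xx} = f(\bar{y}+w) - f(\bar{y}) = f'(\bar{y})w + g(\bar{y},w),
\end{equation*}
with inhomogeneous Dirichlet data $u-\bar{u}$, $v-\bar{v}$ and initial datum $w_0=y_0-\bar{y}$, where the remainder $g(\bar{y},w)=f(\bar{y}+w)-f(\bar{y})-f'(\bar{y})w$ is quadratically small in $w$. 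The goal is to steer $w$ to $0$ in time $T$. The single-steady-state controllability result (\cite{Russell1978, Emanuilov1995, Lebeau1995}, see \cite{Pighin2017}) gives, for the linearized operator $-\partial_{xx}-f'(\bar{y})$ with Dirichlet boundary control, null-controllability together with an observability constant; a standard fixed-point/source-term argument (as in \cite{Pighin2017}) then yields local exact controllability to $\bar{y}$ for the full nonlinear equation, with a radius $\delta$ and a cost $C$ controlling $\max(|u-\bar{u}|,|v-\bar{v}|)$ by $\norm{w_0}_\infty$.

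The heart of the matter, and the only thing beyond citing the known result, is \emph{uniformity} of $C(T)$ and $\delta(T)$ over $\mathcal{S}$. The key structural fact I would exploit is that every steady state is valued in $[0,1]$, so the family $\{\bar{y}:\bar{y}\in\mathcal{S}\}$ is uniformly bounded; since $f\in C^1$, the potentials $f'(\bar{y})$ are uniformly bounded in $L^\infty$ by $\max_{[0,1]}|f'|$, and the nonlinear remainder $g(\bar{y},\cdot)$ admits a modulus of smallness uniform in $\bar{y}$ on bounded sets. The plan is therefore to track the dependence of the observability/controllability constant on the potential: Carleman estimates for $-\partial_{xx}-c(x)$ produce observability constants that depend on $c$ only through $\norm{c}_\infty$, hence are bounded uniformly when $\norm{f'(\bar{y})}_\infty\leq\max_{[0,1]}|f'|$. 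This makes the linear control cost uniform, and feeding this into the fixed-point argument (whose contraction radius depends only on the linear cost and on the uniform quadratic bound on $g$) gives $\delta(T)$ and $C(T)$ independent of $\bar{y}$.

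I expect the main obstacle to be exactly this bookkeeping of constants through the Carleman/observability estimate: one must verify that the weights and the resulting observability inequality degrade only through $\norm{f'(\bar{y})}_\infty$ and not through finer features of $\bar{y}$ (such as its derivatives), so that the uniform $L^\infty$ bound coming from $0\leq\bar{y}\leq 1$ truly suffices. A secondary technical point is the passage from $L^2$-type controllability estimates to the $L^\infty$ bound on $y_0-\bar{y}$ and on the controls stated in the lemma, which requires parabolic smoothing/regularity estimates that are again uniform in $\bar{y}$ because the coefficients are uniformly bounded. Once uniformity of the linear cost is secured, the fixed-point closing the nonlinear argument is routine and its radius is automatically uniform.
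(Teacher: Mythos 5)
Your plan follows essentially the same route as the paper: the paper gives no self-contained proof of this lemma, citing \cite{Russell1978, Emanuilov1995, Lebeau1995} and \cite{Pighin2017} for the single-steady-state local controllability, and justifies uniformity over $\mathcal{S}$ by precisely the observation you exploit — that steady states take values in $[0,1]$, so the potentials $f'(\bar{y})$ are uniformly bounded in $L^\infty$ and the observability/cost constants and fixed-point radius can be chosen uniformly. One minor caveat: since $f$ is only assumed $C^1$, the remainder $g(\bar{y},w)$ is $o(\norm{w}_\infty)$ uniformly (by uniform continuity of $f'$ on compacts) rather than quadratically small, which still suffices for the fixed-point step.
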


With this lemma, we can now explain the staircase method. Applied with a path of admissible steady states, it ensures that one can steer any steady state to another one by controls with values in $[0,1]$.  
\smallskip
\begin{proposition}
\label{FollowPath}
Assume that there exists a path of admissible steady states $(\bar{y}_s)_{0 \leq s \leq 1}$ associated with controls $(\bar{u}_s, \bar{v}_s)_{0 \leq s \leq 1}$. Then there exists a time $t_f>0$ and a control strategy $u,v \in L^\infty(0,t_f;[0,1])$ such that the solution of \eqref{Model} starting at $\bar{y}_0$ satisfies $$y(t_f, \cdot)  = \bar{y}_1.$$
\end{proposition}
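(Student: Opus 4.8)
The plan is to implement the staircase method by exploiting the uniform local controllability of Lemma~\ref{LocalControllability} together with the compactness of the path. The key idea is that since the path $s \mapsto \bar{y}_s$ is continuous in the $C([0,L])$-topology on the compact parameter interval $[0,1]$, it is uniformly continuous, so consecutive steady states along the path can be made arbitrarily close to one another. I will first fix a convenient time $T>0$ and apply Lemma~\ref{LocalControllability} to the set $\mathcal{S} = \{\bar{y}_s : 0 \leq s \leq 1\}$, obtaining constants $C(T)>0$ and $\delta(T)>0$ that are \emph{uniform} over the whole path. This uniformity is precisely what makes the argument work, and it is the reason the lemma was stated for a \emph{family} of steady states rather than a single one.

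Next I would discretize the path. Using uniform continuity of $s \mapsto \bar{y}_s$, I choose an integer $N$ large enough and a subdivision $0 = s_0 < s_1 < \cdots < s_N = 1$ such that
\begin{equation}
\label{SmallJumps}
\norm{ \bar{y}_{s_{k+1}} - \bar{y}_{s_k} }_\infty \leq \delta(T) \quad \text{for all } 0 \leq k \leq N-1.
\end{equation}
The strategy is then to steer the system from $\bar{y}_{s_k}$ to $\bar{y}_{s_{k+1}}$ in time $T$, one step at a time. At step $k$, the current state equals $\bar{y}_{s_k}$ exactly (by the previous step), which plays the role of the initial condition $y_0$ in the lemma, while $\bar{y}_{s_{k+1}}$ plays the role of the target steady state $\bar{y}$. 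The smallness condition \eqref{SmallJumps} guarantees $\norm{\bar{y}_{s_k} - \bar{y}_{s_{k+1}}}_\infty \leq \delta(T)$, so the lemma applies and produces controls $u_k, v_k \in L^\infty(0,T;\mathbb{R})$ driving $\bar{y}_{s_k}$ to $\bar{y}_{s_{k+1}}$ exactly at time $T$. Concatenating these $N$ control pieces over the interval $[0, t_f]$ with $t_f = NT$ yields the desired control steering $\bar{y}_0$ to $\bar{y}_1$.

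The main obstacle is ensuring that the concatenated controls respect the constraint $u,v \in [0,1]$, since Lemma~\ref{LocalControllability} only guarantees controls in $L^\infty(0,T;\mathbb{R})$, a priori not in $[0,1]$. This is exactly where admissibility of the path enters: since each $\bar{y}_s$ is admissible, there is $\eta>0$ with $\eta \leq \bar{u}_s, \bar{v}_s \leq 1-\eta$ uniformly in $s$ (the bound is uniform by continuity of $s \mapsto (\bar{u}_s, \bar{v}_s)$ and compactness of $[0,1]$). The lemma's quantitative estimate $\max(|u_k - \bar{u}_{s_{k+1}}|, |v_k - \bar{v}_{s_{k+1}}|) \leq C(T)\norm{\bar{y}_{s_k} - \bar{y}_{s_{k+1}}}_\infty$ shows that the produced controls stay within $C(T)\,\delta(T)$ of the admissible boundary values. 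Thus, by refining the subdivision further if necessary — that is, strengthening \eqref{SmallJumps} to $\norm{\bar{y}_{s_{k+1}} - \bar{y}_{s_k}}_\infty \leq \min(\delta(T), \eta/C(T))$ — the controls $u_k, v_k$ are forced to lie within $[\,0,1\,]$, and the concatenated strategy is admissible. This completes the argument, with $t_f = NT$ determined by the refined subdivision.
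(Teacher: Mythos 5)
Your proof is correct and follows essentially the same strategy as the paper's: apply the uniform local controllability lemma (with a fixed step time), discretize the path finely enough using continuity and compactness, and use the quantitative estimate $C(T)\norm{\bar{y}_{s_k}-\bar{y}_{s_{k+1}}}_\infty$ together with the uniform admissibility margin to keep the concatenated controls in $[0,1]$. The only cosmetic differences are that the paper fixes $T=1$ and a uniform subdivision $s_k = k/N$, whereas you allow a general $T$ and subdivision.
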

The proof simply goes by applying a finite number of times the local controllability result along the (compact) path of steady states.
\begin{proof}
We use the previous result with time $T=1$. By continuity, $\delta_0 := \min_{0 \leq s \leq 1}(\bar{u}_s, \bar{v}_s, 1-\bar{u}_s,1- \bar{v}_s)>0$. 
We choose an integer $N$ large enough such that for all $k = 1, \ldots, N$, $$\norm{\bar{y}_{\frac{k-1}{N}}- \bar{y}_\frac{k}{N}}_\infty \leq \e,$$
where $\e$ will be defined below.   

For $k = 1, \ldots, N$, let $u_k,v_k$ be the controls in $L^\infty(0,1; \mathbb{R})$ such that the solution of \eqref{Model} starting at $\bar{y}_{\frac{k-1}{N}}$ reaches exactly $\bar{y}_{\frac{k}{N}}$ at time $1$. These controls are such that 
\begin{align*}
\max\left( \left|u_k(t) - \bar{u}_{\frac{k}{N}}\right|, \left|v_k(t) - \bar{v}_{\frac{k}{N}}\right|\right)& \leq C(1) \norm{\bar{y}_{\frac{k-1}{N}} -\bar{y}_{\frac{k}{N}}}_\infty \leq C(1) \e
\end{align*} 
on $[0,1]$. In particular, $u_k(t) \geq \bar{u}_{\frac{k}{N}} -C(1) \e >0$ for $\e$ small enough. We prove similarly that $u_k$ is bounded away from $1$, and the reasoning for $v_k$ is the same.
At this stage, it suffices to define $(u(t),v(t))= (u_k(t-k), v_k(t-k))$ for $t \in (k,k+1)$ to obtain the desired controls, with $t_f=N$.
\end{proof}

\subsection{Phase portrait in the case (H2)}
\label{PhasePortraitAnalysis}

To define a path of steady states linking an appropriate state (say $y_{init}$) to the state $\theta$, phase plane analysis is again instrumental. We will indeed consider a path of steady states $(w_s)_{0 \leq s \leq 1}$ (such that $w_0 = y_{init}$ and $w_1 = \theta$) by choosing a path of initial conditions $s \in [0,1] \mapsto (w_s(0), w'_s(0))$ in the phase plane. For some $L$ fixed, the corresponding controls are $s \in [0,1] \mapsto (w_s(0), w_s(L))$, but there is no reason in general that $0 \leq w_s(L) \leq 1$. 

However, if $0 \leq w_s(L) \leq 1$ for all $s \in [0,1]$, this method gives a path of steady states, defined by the controls $s \in [0,1] \mapsto (w_s(0), w_s(L))$. The continuity of the mapping $s \mapsto w_s$ is ensured by continuity of solutions of ODEs with respect to initial conditions. 

To ensure that the chosen path of initial conditions does not violate $0 \leq w_s(L) \leq 1$, we must analyze further elementary properties of the phase portrait in the case (H2), an example of which we depict on Figure \ref{PhasePortraitBistable}. 

\begin{figure}[H]
\begin{center}
\includegraphics[scale=1.1]{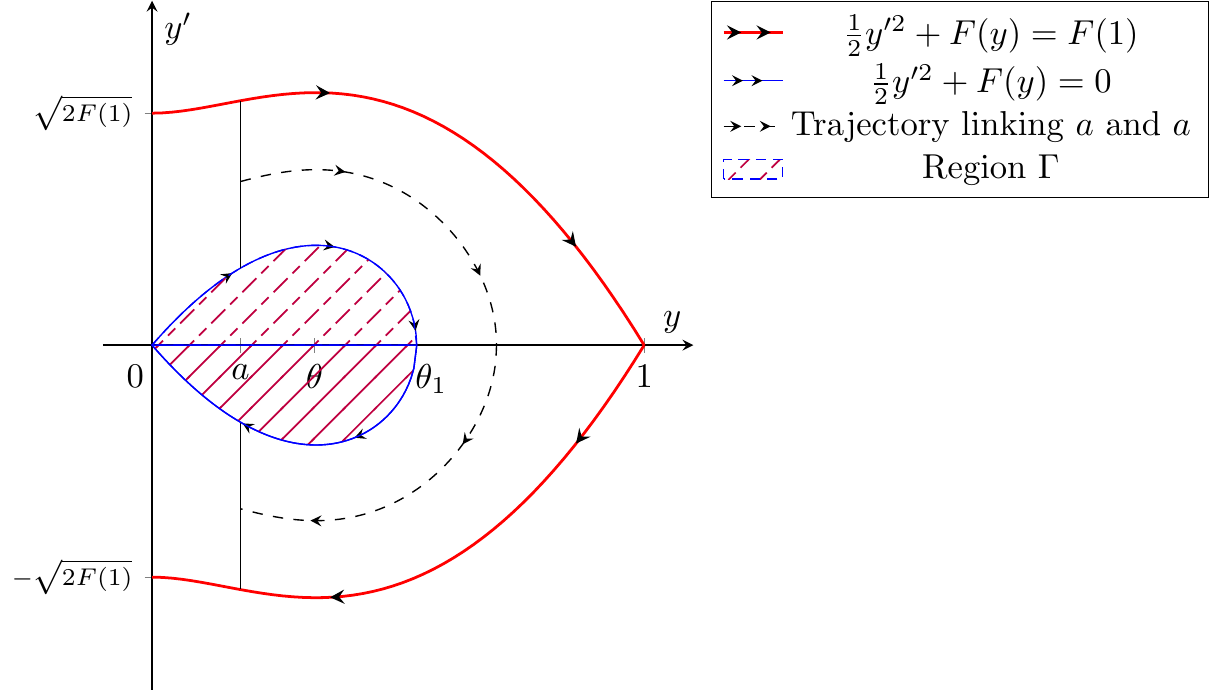}
\end{center}
\caption{Phase portrait in the bistable case (H2). (Here, $f(y) = y (1-y) (y-\theta)$, $\theta = \f{1}{3}.$) The hatched region is $\Gamma$, delimited by the trajectory of energy $\f12 y'^2 + F(y) = 0$. Also depicted: the trajectory of energy $\f12 y'^2 + F(y) =F(1)$ and an example of a trajectory starting at $a$ and ending at $a$.}
\label{PhasePortraitBistable}
\end{figure}

There are two curves of importance in the phase portrait for (H2). The first one is defined by the energy $\frac{1}{2} y'^2 + F(y) = F(1)$, while the second is a homoclinic curve and has energy $\frac{1}{2} y'^2 + F(y) = 0$. Note that if one starts with an initial condition along the first curve (resp. the second curve), it takes an infinite time (here, length), for the corresponding solution to the ODE $-w''=f(w)$ to reach $1$ (resp. $0$). The first result has been established in Proposition \ref{FormulaThreshold}, the second in Proposition \ref{LengthBistable}. With the notations of these propositions, this is because $L(\alpha)$ tends to $+\infty$ when $\alpha$ tends to $0$ or $F(1)$. 

We define $\Gamma$ to be the region defined by the set of points $(x,y)$ such that $|y| \leq \sqrt{-2F(x)}$, that is, those delimited by the homoclinic curve. The important result in what follows is that any initial condition $(w(0), w'(0))$ inside $\Gamma$ is such that the corresponding trajectory $w(x)$ remains indefinitely between $0$ and $1$ (actually, between $0$ and $\theta_1$). 

Finally, let us fix some $a \in [0,1]$. We look at all the trajectories starting with $w(0) = a$ and outside the interior of $\Gamma$, namely with $\sqrt{-2F(a)} \leq w'(0) \leq \sqrt{2(F(1)-F(a))}$. We define $L^a$ to be the minimal time for such trajectories to reach $a$ again. Note that with this definition, we clearly have $L^0 = L^\star$.

\subsection{The control strategy induced by phase plane analysis}

Let us now define the control strategy, which works not only for $L< L_\theta$ but more generally for $L < L^\star$, based on the staircase method. The core idea is to find a path of steady states between $0$ and $\theta$, which, as we shall see, is possible if and only if $L<L^\star$. However $0$ is not admissible so that we must instead resort to another close admissible steady state. We will build an admissible steady state $y_{init}$ such that
\begin{itemize}
\item $y_{init}$ can be reached asymptotically for any initial condition, 
\item there exists a path of admissible steady states linking $y_{init}$ to $\theta$.
\end{itemize}
The key lemma in order to obtain such a state is the following. 
\smallskip
\begin{lemma}
\label{Steady}
Let $L< L^\star$. 
Then for any $\e< \theta_1$ small enough, the solutions $0 \leq w \leq 1$ of
\begin{equation}
\begin{cases}
\label{StationaryEpsilon}
- w_{xx} = f(w), \\
w(0) =  \e ,  \; w(L) = \e.\\
\end{cases}
\end{equation}
are in $\Gamma$, namely they must be such that $|w'(0)| \leq \sqrt{-2F(\e)}$.
\end{lemma}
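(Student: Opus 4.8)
The plan is to read \eqref{StationaryEpsilon} in the phase plane and exploit conservation of the energy $E=\frac12 w'^2+F(w)$ along solutions of $-w''=f(w)$. Since $\Gamma=\{(x,y):|y|\le\sqrt{-2F(x)}\}$ is precisely the set where $\frac12 y^2+F(x)\le 0$, a trajectory lies in $\Gamma$ if and only if its energy is $\le 0$; in particular the starting point $(\e,w'(0))$ lies in $\Gamma$ iff $\frac12 w'(0)^2+F(\e)\le 0$, i.e. iff $|w'(0)|\le\sqrt{-2F(\e)}$ (here $F(\e)\le 0$ because $\e<\theta_1$). I therefore argue by contradiction: suppose there is a sequence $\e_n\to 0$ and solutions $w_n$ of \eqref{StationaryEpsilon} with $0\le w_n\le 1$ but energy $E_n>0$, and I aim to contradict $L<L^\star$.

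The first reduction is that necessarily $w_n'(0)>0$. Indeed, if $w_n'(0)<0$, then since $F<0$ on $(0,\theta_1)$ and $E_n>0$, the equation $E_n=F(w)$ has no root in $(0,\e_n)$, so $w_n$ has no turning point below $\e_n$; it decreases monotonically and crosses $0$ with nonzero speed $-\sqrt{2E_n}$, violating $w_n\ge 0$. Hence $w_n$ rises monotonically to its apex $\beta_n:=F^{-1}(E_n)\in(\theta_1,1)$ (the upper branch of $F^{-1}$, in the paper's convention), where $w_n'=0$, and then decreases; because $w_n$ must stay nonnegative and $w_n(L)=\e_n$, the domain length is forced to equal the first return length to the level $\e_n$, namely
\[ L=\sqrt2\int_{\e_n}^{\beta_n}\frac{dy}{\sqrt{F(\beta_n)-F(y)}}=:\tilde L^{\e_n}(\beta_n). \]
(Here $E_n<F(1)$ necessarily, otherwise the apex would exceed $1$ or the return length would be infinite, by Proposition \ref{FormulaThreshold}.)

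The core of the argument is then to show $\liminf_n\tilde L^{\e_n}(\beta_n)\ge L^\star$, which contradicts $\tilde L^{\e_n}(\beta_n)=L<L^\star$. Passing to a subsequence, let $\beta_n\to\beta_\infty\in[\theta_1,1]$. If $\beta_\infty>\theta_1$, then $F(\beta_n)\ge\delta>0$ for large $n$, so the omitted piece obeys $\sqrt2\int_0^{\e_n}\frac{dy}{\sqrt{F(\beta_n)-F(y)}}\le\sqrt2\,\e_n/\sqrt{\delta}\to 0$; writing $\tilde L^{\e_n}(\beta_n)=L(F(\beta_n))-o(1)$ and using that $L(F(\beta_n))\to L(F(\beta_\infty))\ge L^\star$ by continuity of $L$ on $(0,F(1))$ (respectively $L(\alpha)\to+\infty$ as $\alpha\to F(1)$ if $\beta_\infty=1$, again from Proposition \ref{FormulaThreshold}), one gets $\liminf\tilde L^{\e_n}(\beta_n)\ge L^\star$.

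The delicate case, and the main obstacle, is $\beta_\infty=\theta_1$, i.e. $E_n=F(\beta_n)\to 0^+$: now the omitted piece is no longer negligible, since $L(F(\beta_n))\to+\infty$ as well, so one cannot simply subtract. Instead I bound $\tilde L^{\e_n}(\beta_n)$ from below directly and show it diverges. Using $F(0)=F'(0)=0$ and $F''(0)=f'(0)<0$, fix $\delta_0\in(0,\theta_1)$ and $c>0$ with $-F(y)\le c\,y^2$ on $[0,\delta_0]$; then on $[\e_n,\delta_0]$ we have $F(\beta_n)-F(y)\le F(\beta_n)+c\,y^2$, and an elementary antiderivative gives
\[ \tilde L^{\e_n}(\beta_n)\ge\sqrt2\int_{\e_n}^{\delta_0}\frac{dy}{\sqrt{F(\beta_n)+c\,y^2}}=\sqrt{\frac{2}{c}}\,\ln\frac{p_n+\sqrt{p_n^2+1}}{q_n+\sqrt{q_n^2+1}}, \]
with $p_n=\sqrt{c}\,\delta_0/\sqrt{F(\beta_n)}$ and $q_n=\sqrt{c}\,\e_n/\sqrt{F(\beta_n)}$. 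Crudely bounding the numerator by $p_n$ and the denominator by $2q_n+1$, the ratio is $\ge\sqrt{c}\,\delta_0/(2\sqrt{c}\,\e_n+\sqrt{F(\beta_n)})$, which tends to $+\infty$ because both $\e_n\to 0$ and $F(\beta_n)\to 0$. Hence $\tilde L^{\e_n}(\beta_n)\to+\infty$, once more contradicting $\tilde L^{\e_n}(\beta_n)=L$. Combining the three cases, no such counterexample exists for $\e$ small enough, which proves the lemma.
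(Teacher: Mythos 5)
Your proof is correct, and it differs from the paper's in an instructive way. The paper's own proof is essentially two lines: with $L^\e$ as in Subsection \ref{PhasePortraitAnalysis} (the minimal length for a trajectory starting at level $\e$ outside the interior of $\Gamma$ to return to that level), it \emph{asserts} that $L^\e \to L^0 = L^\star$ as $\e \to 0$, chooses $\e$ so that $L < L^\e$, and concludes by the definition of $L^\e$. Your argument rests on the same phase-plane reduction --- a solution of \eqref{StationaryEpsilon} not in $\Gamma$ must have $w'(0)>0$, energy $E \in (0,F(1))$, and length equal to the first-return length $\sqrt{2}\int_{\e}^{F^{-1}(E)} dy/\sqrt{E-F(y)}$ --- but where the paper merely asserts the limiting behavior of $L^\e$, you prove the one inequality actually needed, namely $\liminf_{\e \to 0} L^\e \geq L^\star$, by contradiction along a sequence $\e_n \to 0$. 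Your case split is exactly where the substance lies: when the energies $F(\beta_n)$ stay bounded away from $0$, discarding the piece of the integral over $[0,\e_n]$ costs only $O(\e_n)$, and the infimum formula \eqref{Formula} gives $L(F(\beta_n)) \geq L^\star$ directly (you do not even need the continuity of $L(\cdot)$ that you invoke); when $F(\beta_n) \to 0^+$, i.e.\ the trajectories collapse onto the homoclinic loop, both pieces of the integral degenerate simultaneously, and your quadratic bound $-F(y) \leq c\,y^2$ near $0$ with the resulting logarithmic lower bound, divergent because $2\sqrt{c}\,\e_n + \sqrt{F(\beta_n)} \to 0$, is precisely the estimate the paper leaves implicit. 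So what your route buys is a self-contained, quantitative proof of the lower semicontinuity of $\e \mapsto L^\e$ at $\e = 0$, which the paper takes for granted, at the cost of length. Two trivial completions you should spell out: the dismissal of $E_n \geq F(1)$ follows as in your $w'(0)<0$ case (no turning point in $[0,1]$ forces strict monotonicity on $[0,L]$, contradicting $w(L)=w(0)$), and in that $w'(0)<0$ case the possibility that the zero-crossing occurs beyond $x=L$ is excluded by the same monotonicity argument rather than by positivity.
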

At this stage, we do not know that $y_{init}$ is unique, a fact which is not necessary for the proof of the next theorem, but we shall clarify this point in the next subsection. 

\begin{proof}
With the notations of Subsection \ref{PhasePortraitAnalysis}, $L^\varepsilon$ tends to $L^0 = L^\star$ when $\e$ tends to $0$, and thus we can choose $\e$ small enough such that $L < L^\e < L^\star$.
Consequently, by the very definition of $L^\e$, there is no solution to \eqref{StationaryEpsilon} other than those in $\Gamma$. 
\end{proof}
\smallskip
\begin{theorem}
\label{ControlTheta}
\eqref{Model} is controllable towards $\theta$ in finite time (or infinite time) if and only if~$L < L^\star$. 
\end{theorem}
\begin{proof}
We fix $L < L^\star$ and some initial data $0 \leq y_0 \leq 1$ in $L^\infty(0,L)$. 
Assume that $\e>0$ is small enough so that the conclusions of Lemma \ref{Steady} hold true. The idea is to first use static Dirichlet controls $u(t) = v(t) = \e$ for a long time, because Lemma \ref{Steady} ensures that the trajectory will converge to some steady state $y_{init}$ in $\Gamma$, independently of the initial condition. Such a steady state can then be reached exactly because of the local controllability result. Finally, the fact that $y_{init}$ is in $\Gamma$ allows us to find a path of steady states linking it to $\theta$, so that it remains to use the staircase method. 

\textit{First step.} We start by approaching a steady state $y_{init}$ in $\Gamma$.
Consider the equation
\begin{equation*}
\begin{cases}
y_t  - y_{xx} = f(y), \\
y(t,0) = \e,  \; y(t,L) = \e,\\
y(0) = y_0.
\end{cases}
\end{equation*} 
Then, by Theorem \ref{Matano}, the solution must converge to a steady state with Dirichlet boundary conditions $(\e, \e)$. By Lemma \ref{Steady}, this is some state in $\Gamma$, which we denote $y_{init}$. 
In particular, for any $\eta > 0$, there exists $t_0 > 0$ such that for $t \geq t_0$, $\norm{ y(t,\cdot) - y_{init} }_\infty \leq \eta$. 
Thus, we start by taking $u(t) = \e$, $v(t) =\e$ on $(0,t_0)$ ($\eta$ and the corresponding $t_0$ will be fixed appropriately in the next step).

\textit{Second step}.
We now make use of Lemma \ref{LocalControllability} with for example time $1$ and choosing $\eta$ (and corresponding $t_0$) such that $C(1) \eta$ is small enough for $\e - C(1) \eta  > 0$ to hold. This provides controls $\tilde{u}, \tilde{v}$ in $L^\infty(0,1;[0,1])$ such that defining $u(t) = \tilde{u}(t-t_0)$, $v(t) = \tilde{v}(t-t_0)$ on $(t_0, t_0+1)$, we have $y(t_0 +1, \cdot) = y_{init}$.

\textit{Third step}. We build a path $c$ of initial conditions linking the initial conditions associated with $y_{init}$, \textit{i.e.} $(\e,y'_{init}(0))$, and $\theta$, \textit{i.e.}, $(\theta,0)$. The simplest choice is the straight line, illustrated by Figure \ref{PathSteadyStates} below. 

\begin{figure}[h!]
\begin{center}
\includegraphics[scale=0.9]{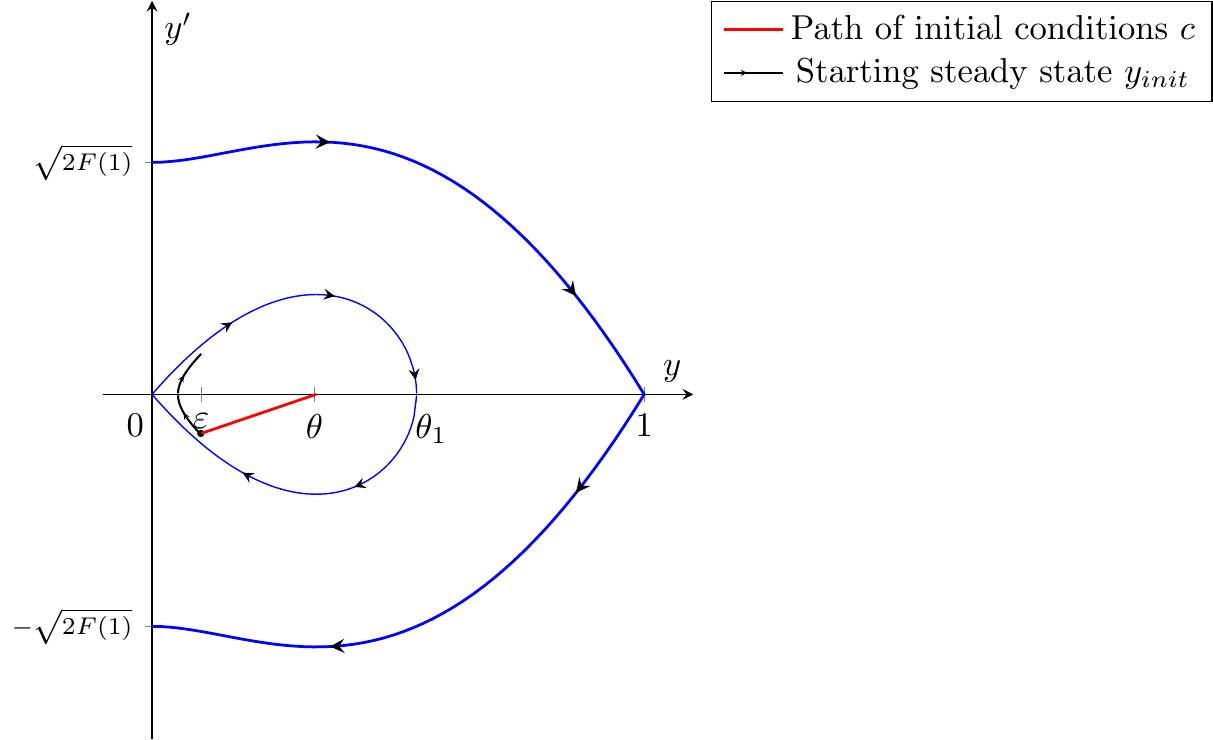}
\end{center}
\caption{The path $c$ of initial conditions, from those of $y_{init}$ (black dot) and those for $\theta$. The state $y_{init}$ is also depicted. }
\label{PathSteadyStates}
\end{figure}

We denote $\gamma$ the path of admissible steady states associated with $c$, and it now just remains to follow this path: by Theorem \ref{FollowPath}, there exist a time $T_0$ and controls $u_0, v_0$ in $L^\infty(0,T_0;[0,1])$ bringing $y_{init}$ to $\theta$. We set $u(t) = u_0(t-(t_0 +1))$, $v(t) = v_0(t-(t_0 +1))$ on $(t_0 +1, t_0 +1 +T_0)$ and $T= t_0 + 1 +T_0$. The controls $u$ and $v$ are indeed such that $y(t, \cdot)$ reaches exactly $\theta$ at time $T$.

We now prove the converse and assume $L\geq L^\star$. We already saw in the Introduction that if there exists a nontrivial solution $0 \leq w \leq 1$ to 
\begin{equation*}
\begin{cases}
- w_{xx} = f(w), \\
w(0) = 0,  \; w(L) = 0,\\
\end{cases}
\end{equation*}
it satisfies $w > \theta$ somewhere inside $(0,L)$. As already pointed out when it came to controlling towards $0$, for any control strategy $u(t)$, $v(t)$, the solution of \eqref{Model} with $y_0 \geq w$ satisfies 
$y(t, \cdot) \geq w$. If we had found a control strategy bringing us in finite (or infinite time) towards $\theta$, we would have $w \leq \theta$, a contradiction.
\end{proof}

\begin{remark}

The path $c$ is a path of initial conditions $(w_s(0), w_s'(0))$, indexed by $0\leq s \leq 1$. To clarify the associated steady states, we depict a typical example in Figure \ref{BoundaryControlsPath}. In the previous proof, the control on the left $u_s=w_s(0)$ has been chosen to increase from~$\e$ to~$\theta$. We find that the corresponding control on the right $v_s = w_s(L)$ rapidly takes values above~$\theta$. 
 
 \begin{figure}[h!]
\begin{center}
\includegraphics[scale=0.65]{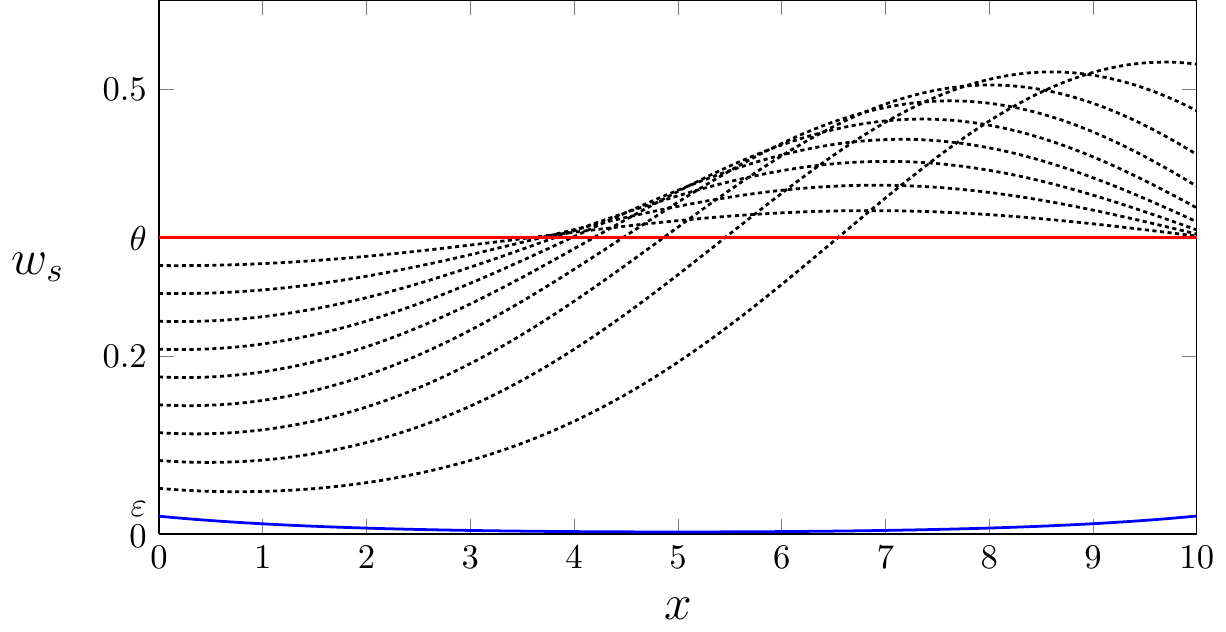}
\end{center}
\caption{Path of steady states $(w_s)$ from $y_{init}$ (in blue) to $\theta$ (in red), for $L=10$, $f(y) =y(1-y)(y-\theta)$, $\theta = \frac{1}{3}$, $\e = 0.02$.}
\label{BoundaryControlsPath}
\end{figure}

Assume that $L_\theta \leq L < L^\star$ and that the non-homogeneous solution to the stationary problem with $\theta$-boundary conditions, say $z$, satisfies $z \leq \theta$. Then a path of steady states with controls $u_s$ and $v_s$ both below $\theta$ would not work because $z$ would be an obstacle for a trajectory starting from $y_0 \leq z$. This explains why some controls $v_s$ on the right are above~$\theta$.
\end{remark}

\subsection{Uniform time of controllability}

Let us now use this control strategy to prove that there are no initial conditions requiring an arbitrarily long (finite) time to be brought to $\theta$. In other words, denoting $T_{min}(y_0)$ the minimal time for some initial condition $0\leq y_0\leq 1$ in $L^\infty(0,L)$ to be controlled to $\theta$, we have the following proposition.
\smallskip
\begin{proposition}
\label{UniformTime}
For $L<L^\star$, there exists a uniform time $T>0$ below which all initial conditions $0\leq y_0\leq 1$ in $L^\infty(0,L)$ are controllable to $\theta$, namely
\begin{equation}
\sup\, T_{min}(y_0) < \infty, 
\end{equation}
where the supremum is taken over all $0\leq y_0\leq 1$ in $L^\infty(0,L)$. 
\end{proposition}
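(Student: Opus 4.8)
The plan is to revisit the three-step strategy of Theorem~\ref{ControlTheta} and show that its total duration $T = t_0 + 1 + T_0$ can be bounded independently of $y_0$. The second step (local exact controllability to $y_{init}$, Lemma~\ref{LocalControllability}) takes a fixed time (here $1$), and the third step (following the path of steady states from $y_{init}$ to $\theta$ via Proposition~\ref{FollowPath}) takes a fixed time $T_0$ depending only on the chosen path and not on $y_0$; once the common state $y_{init}$ is reached exactly at time $t_0+1$, the remaining trajectory is literally the same for every initial datum. Thus everything reduces to showing that the duration $t_0$ of the initial static phase $u\equiv v\equiv\e$ --- the time needed to bring $y(t_0,\cdot)$ within $\eta$ of $y_{init}$ in $L^\infty$ --- can be chosen uniformly over all $0\leq y_0\leq 1$.

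To obtain this uniformity I would use the parabolic comparison principle to sandwich every trajectory between two fixed ones. Let $\ol y$ and $\ul y$ solve the static-$\e$ problem with constant initial data $y_0\equiv 1$ and $y_0\equiv 0$ respectively; then $\ul y(t,\cdot)\leq y(t,\cdot)\leq \ol y(t,\cdot)$ for every admissible $y_0$. By Theorem~\ref{Matano} both $\ol y(t,\cdot)$ and $\ul y(t,\cdot)$ converge uniformly as $t\to+\infty$ to stationary states with boundary values $\e$, which by Lemma~\ref{Steady} lie in $\Gamma$; comparing each trajectory with its own time-translates shows moreover that $\ol y$ decreases and $\ul y$ increases monotonically in time, so their limits $s_+\geq s_-$ are respectively the maximal and minimal steady states in the order interval $[0,1]$ with boundary $\e$. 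The decisive point is that these two limits coincide: for $\e$ small and $L<L^\star$ there is a \emph{unique} steady state $y_{init}$ with boundary $\e$ lying in $\Gamma$, so $s_-=s_+=y_{init}$. Granting this, fix $\eta>0$ and pick $t_0$ with $\norm{\ol y(t_0,\cdot)-y_{init}}_\infty\leq\eta$ and $\norm{\ul y(t_0,\cdot)-y_{init}}_\infty\leq\eta$; the sandwich then forces $\norm{y(t_0,\cdot)-y_{init}}_\infty\leq\eta$ for \emph{every} $y_0$, and this $t_0$ depends only on $\e$ and $\eta$. Choosing $\eta$ small enough for the second step of Theorem~\ref{ControlTheta} to apply, we conclude that $T=t_0+1+T_0$ is a uniform controllability time, whence $\sup_{y_0}T_{min}(y_0)\leq T<+\infty$.

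The main obstacle is precisely the uniqueness of $y_{init}$ (equivalently $s_-=s_+$), which the remark following Lemma~\ref{Steady} flagged as not yet known. I would establish it by the phase-plane analysis of Subsection~\ref{PhasePortraitAnalysis}: a steady state with boundary $\e$ inside $\Gamma$ corresponds to a trajectory of $-w''=f(w)$ on a periodic orbit around the center $\theta$ joining the line $w=\e$ to itself in total length exactly $L$, and the crux is to show that the map sending such an orbit to its traversal length is strictly monotone on the admissible range, in the spirit of the monotonicity argument of Proposition~\ref{LengthMonostable}. Restricting to $\e$ small and $L<L^\star<L^\e$ should rule out the competing configurations (in particular orbits making a full excursion or dipping below $\e$), leaving a single admissible orbit and hence a single $y_{init}$. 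Once this monotonicity is in hand the remainder of the argument is soft, relying only on the comparison principle and the uniform convergence furnished by Matano's theorem.
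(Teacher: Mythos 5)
Your overall architecture matches the paper's proof: reduce everything to showing that the static phase $u\equiv v\equiv\e$ reaches a fixed $\eta$-neighborhood of $y_{init}$ in a time independent of $y_0$, sandwich an arbitrary trajectory between the two extremal ones issued from $y_0\equiv 0$ and $y_0\equiv 1$ via the comparison principle, invoke Theorem \ref{Matano} for their convergence, and observe that the whole argument hinges on the \emph{uniqueness} of the steady state with boundary value $\e$ --- which is exactly the paper's Lemma \ref{UniqueInit}. (Your monotonicity-in-time claim for the two extremal trajectories is correct but superfluous once uniqueness is known: any uniform convergence suffices.) So the soft part of your proposal is correct and essentially identical to the paper's.

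The gap is in your proposed proof of uniqueness, and it is twofold. First, the geometry is backwards: you propose to rule out orbits ``dipping below $\e$'', but for $\e$ small the unique steady state $y_{init}$ \emph{is} such a dip --- the paper first proves that every solution of the stationary problem with boundary value $\e$ satisfies $w\leq\e$. What must be excluded are the arcs rising \emph{above} $\e$, i.e.\ those looping around the center $\theta$: any orbit inside $\Gamma$ meeting the line $w=\e$ has energy between $F(\e)$ and $0$, hence approaches the homoclinic as $\e\to 0$, so its excursion to the right of $w=\e$ requires a length tending to $+\infty$; this limiting argument, not the inequality $L<L^\e$, is what eliminates them ($L^\e$ by definition only constrains trajectories \emph{outside} $\Gamma$, cf.\ Lemma \ref{Steady} and Subsection \ref{PhasePortraitAnalysis}, so it cannot rule out any configuration inside $\Gamma$). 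Second, for the surviving family (the dips below $\e$), you assert strict monotonicity of the length map ``in the spirit of Proposition \ref{LengthMonostable}'' without proof. That proposition rests on the extra hypothesis \eqref{HypMonostable}, which is a genuine restriction (the paper itself notes that the analogous condition at $\theta$ fails for the cubic nonlinearity), and its computation does not transfer here: after the substitution $z=\e-w$ the relevant nonlinearity is $z\mapsto -f(\e-z)$, which does \emph{not} vanish at the boundary value $z=0$, so the setting is different from the monostable one. The paper sidesteps time-map monotonicity entirely: for two solutions $w_1,w_2\leq\e$ it sets $z=w_1-w_2$, uses that $f$ is decreasing on $[0,\e]$ (a consequence of $f'(0)<0$) to derive a sign contradiction at the first critical point of $z$ unless $z'(0)=0$, and then concludes $w_1=w_2$ by Cauchy--Lipschitz uniqueness. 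Your proof becomes complete if you replace your phase-plane uniqueness sketch by such an argument (or actually prove monotonicity of the correct time map --- the one for dips below $\e$ --- for $\e$ small).
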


\begin{proof}
Let $L < L^\star$ be fixed. We will make use of the control strategy of Theorem \ref{ControlTheta}. Let us remark that the time required by the second step (exact controllability to $y_{init}$) and by the third step (following the path of steady states) are independent of $y_0$. To conclude, we must analyse whether the time for the first step (approaching $y_{init}$) can be taken to be uniform over all possible initial conditions. This relies on the following lemma, which clarifies the uniqueness of $y_{init}$. 
\smallskip
\begin{lemma}
\label{UniqueInit}
There is only one solution $0 \leq y_{init} \leq 1$ to the stationary problem 
\begin{equation*}
\begin{cases}
- w_{xx} = f(w), \\
w(0) = \e,  \; w(L) = \e,\\
\end{cases}
\end{equation*}
for $\e$ small enough. 
\end{lemma}
Let us temporarily assume this result and explain how it concludes the proof. Recall that the second step requires to be close enough to $y_{init}$ so that the local exact controllability result can be used with controls lying in $[0,1]$. As in the previous proof, we characterize the corresponding neighborhood in $L^\infty(0,L)$ by $\eta$, that is, the first step is stopped when $\norm{y(t,\cdot) - y_{init}}_{L^\infty} \leq \eta$.  
We take the two extremal initial conditions $y_0^0 = 0$ and $y_0^1 = 1$, and denote the corresponding trajectories $y^0$ and $y^1$, with controls $u(t) = v(t) =\e$. We know that these trajectories both converge to the unique solution $y_{init}$ of \eqref{UniqueInit}, so that we can choose $t^\star$ such that for $t\geq t^{\star}$, both  $\norm{y^0(t,\cdot) - y_{init}}_{L^\infty} \leq \eta$ and $\norm{y^1(t,\cdot) - y_{init}}_{L^\infty} \leq \eta$ hold. 

For any initial condition $y_0^0 = 0 \leq y_0 \leq 1 =y_0^1 $, the parabolic comparison principle yields that the corresponding trajectory $y$ with controls $u(t) = v(t) =\e$ satisfies $y^0 \leq y \leq y^1$ for all times. In particular, we have $\norm{y(t,\cdot) - y_{init}}_{L^\infty} \leq \eta$ for $t\geq t^\star$. This proves that the first step takes a uniform time (bounded by $t^\star$) to bring any initial condition to the prescribed neighborhood of $y_{init}$. 
\end{proof}
We now complete the proof by proving Lemma \ref{UniqueInit}. 
Let us first prove that for $\e$ small enough, any solution of the previous problem will be such that $w \leq \e$. To do so, we shall prove that a solution with $w \geq \e$ exists only for values of $L$ which tend to $+\infty$ e as $\e$ tends to~$0$. Indeed, we already know by Lemma~\ref{Steady} that these solutions are curves in the phase plane lying in the region $\Gamma$ for $\e$ small enough. If $w \geq \e$, such a solution must be associated to some initial condition $w(0) = \e$, $0 \leq w'(0) \leq -\sqrt{-2F(\e)}$ (not $w'(0) < 0$, since otherwise $w$ is below $\e$ close to $x=0$). As $\e$ tends to $0$, these curves tend to the homoclinic curve delimiting the region $\Gamma$, which links $0$ to $0$ but in infinite time. Thus, for $\e$ small enough there are no solutions $w \geq \e$ to the stationary problem for some fixed $L$. 

At this stage, it remains to prove that there exists only one solution $0 \leq w \leq \e$ of the stationary problem for $\e$ small enough. Assume that there are two, $w_1$ and $w_2$, so that the difference $z=w_1 -w_2$ satisfies the elliptic equation $-z_{xx} = f(w_1) - f(w_2)$, with $z(0) = z(L) =0$. We choose $\e$ small enough so that $f$ is decreasing on $[0,\e]$. First assume $z'(0)>0$. 
Then $z'>0$ at least locally around $0$. If $z'> 0$ on the whole $(0,L)$, then $\e = w_1 (L) > w_2(L)=\e$  which is not possible. Thus there must be some $x_0 \in (0,L)$ such that $z'(x_0) = 0$, and we may choose it to be the first zero of $z'$. At $x_0$ it must hold that $z_{xx}(x_0) \leq 0$. However, on $(0,x_0]$, we have $z(x) = w_1(x) - w_2(x) > 0$. 
Because both $w_1$ and $w_2$ are below $\e$ and due to the monotonicity  of $f$, we obtain at $x_0$ $$- z_{xx}(x_0) = f\left(w_1(x_0) \right) - f(w_2(x_0)) < 0,$$ a contradiction. 

If $z'(0) < 0$, the reasoning is the same as before to get a contradiction. Hence, we necessarily have $z'(0) = 0$: $w_1$ and $w_2$ have the same derivative at $0$. By Cauchy-Lipschitz uniqueness, $w_1 = w_2$. 
\qed

\section{Numerical simulations, comments and perspectives}
\label{Section4}
\subsection{A numerical optimal control approach}
\label{Section4.1}
We consider the case (H2), and look for numerical control strategies to reach the state $\theta$ with the goal of both

\begin{itemize} 
\item illustrating the theoretical results,
\item investigating alternative strategies to the staircase one obtained by phase plane analysis.
\end{itemize}
To this end, we consider the following optimal control problem for some final time $T>0$: $$\text{minimize} \;\; C_T(u,v) =  \norm{y(T,\cdot) -\theta}_{L^2(0,L)}^2$$
over controls $u,v \in L^\infty(0,T; [0,1])$, and where $y$ solves \eqref{Model}.  

We are interested in seeing whether, for a given $L>0$, we can find some $T>0$ such that this optimal control problem leads to a very small cost: this will correspond to a strategy such that $y(T, \cdot)$ is very close to $\theta$. We do not need to reach $\theta$ exactly because we know that, once very close to it, there is a control strategy to reach it exactly, given by Lemma~\ref{LocalControllability}. In some instances, we will also force the controls to be equal to $\theta$ to illustrate when this control strategy suffices to reach $\theta$. 

To study this optimal control problem from a numerical point of view, we use direct methods. In a few words, the idea is to discretize the whole problem both in time and space, through discretization parameters $N_t$ and $N_x$, and to solve the resulting high but finite-dimensional optimization problem. This last step is done through the combination of automatic differentiation softwares (with the modeling language AMPL, see~\cite{Fourer2002}) and expert optimization routines (with the open-source package IpOpt, see~\cite{Waechter2006}). 

All the numerical experiments will be led with 
\begin{equation*}
f(y) = y(1-y)(y-\theta),~~~ \theta = \f 13.
\end{equation*}
In this section, we take 
\[y_0 = 0.1 \f x L + 0.8 \left(1- \f x L\right), \]
and 
\begin{equation*}
N_x = 60, ~~~ N_t = 400.
\end{equation*}
With this choice of function $f$, $\theta$, using the formula for $L^\star$, we find numerically $L^\star \approx 10.43$. As for the threshold $L_\theta$, we find $L_\theta \approx 6.29$.   

We start by taking $L=5 < L_\theta$ and impose $\theta$ on the boundary. For $T=20$, we indeed find that this is enough to approach $\theta$, see Figure \ref{cas1}. 

\begin{figure}[h!]
\begin{center}
\includegraphics[scale=0.58]{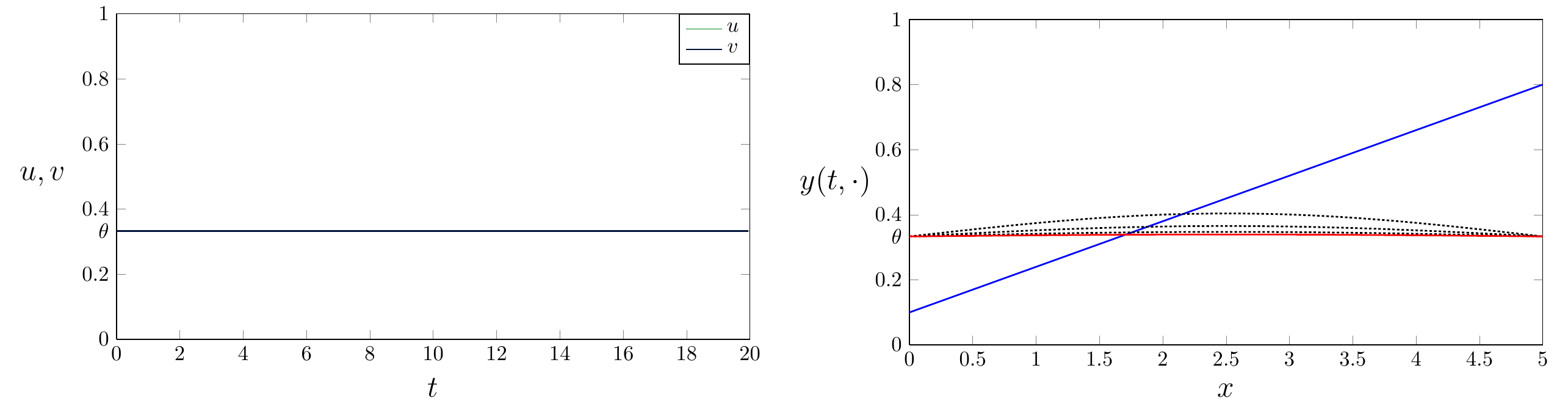}
\end{center}
\caption{Static strategy $u=v = \theta$ and resulting state $y(t, \cdot)$ from time $0$ (in blue), to $T$ (in red) with intermediate times $\frac{T}{4}$, $\frac{T}{2}$, $\frac{3T}{4}$ (in dashed line). Here, $L = 5< L_\theta$ and $T=20$.}
\label{cas1}
\end{figure}

For $L_\theta < L=8< L^\star $ and $T=20$ (or larger final times), the static strategy is not enough as already known theoretically and evidenced by the upper graphs of Figure \ref{cas2}. The lower graphs show the optimal control, as obtained numerically, to reach $\theta$: the interesting feature is that it oscillates very quickly around $\theta$ near the final time $T$. This is a common feature when controlling a heat equation to zero~\cite{Lions1988}. Also worth mentioning is the fact that controls take small values for a long time, which is reminiscent of the first long phase of our staircase strategy with $u(t) = v(t) =\e$ for a small $\e$.

\begin{figure}[h!]
\begin{center}
\includegraphics[scale=0.58]{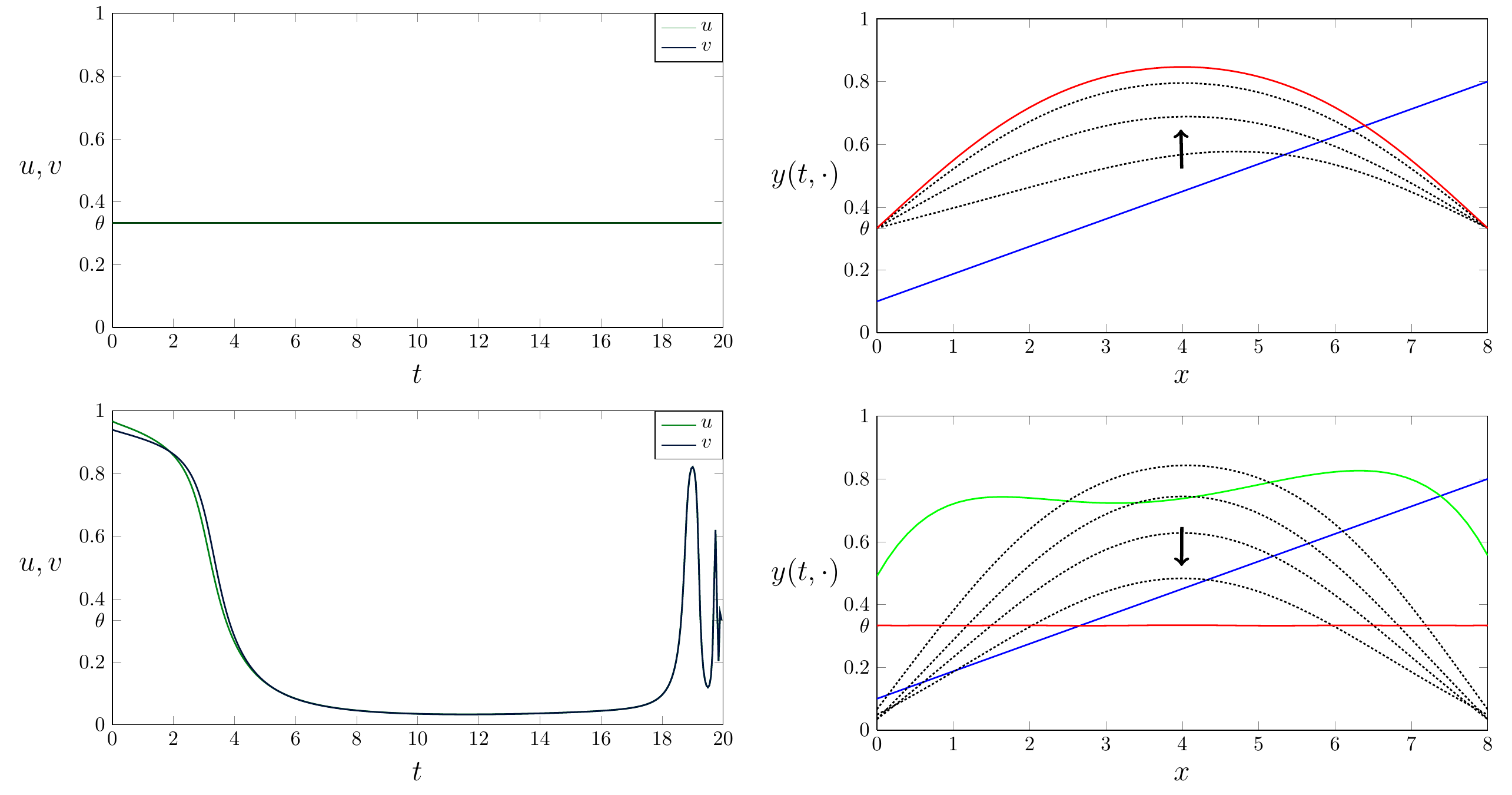}
\end{center}
\caption{Static strategy $u=v = \theta$ (upper) or optimal controls $u$, $v$ (lower) and resulting state $y(t, \cdot)$ from time $0$ (in blue), to $T$ (in red) with intermediate times $\frac{T}{4}$, $\frac{T}{2}$, $\frac{3T}{4}$ (upper, in dashed line) or $\frac{T}{6}$ (in green), $\frac{2T}{6}$, $\frac{3T}{6}$, $\frac{4T}{6}$, $\frac{5T}{6}$ (lower, in dashed line). Here, $L_\theta < L=8< L^\star $ and $T=20$.}
\label{cas2}
\end{figure}

For $L = 12 > L^\star$ and even for a large final time $T=100$, the control strategy minimizing the cost does not bring the final state close to $\theta$, see Figure \ref{cas3}. One can see that the control is close to $0$ for a long time, trying to bring the solution down but it remains blocked by a non-zero solution to the stationary problem with zero Dirichlet boundary conditions.
\begin{figure}[h!]
\begin{center}
\includegraphics[scale=0.58]{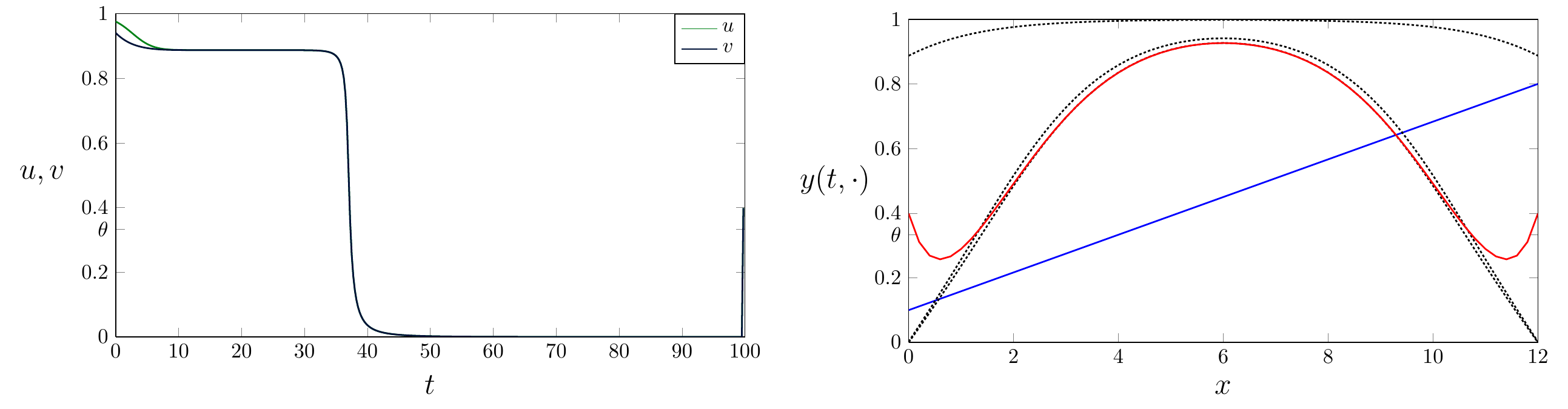}
\end{center}
\caption{Optimal controls $u$, $v$ and resulting state $y(t, \cdot)$ from time $0$ (in blue), to $T$ (in red) with intermediate times $\frac{T}{4}$, $\frac{2T}{4}$, $\frac{3T}{4}$ (in dashed line). Here, $L = 12 > L^\star$ and $T=100$.}
\label{cas3}
\end{figure}

\paragraph{About taking the same Dirichlet controls $u=v$.} 
One important feature reflected by these simulations is that the optimal controls $u$ and $v$ are actually very close to one another, almost equal after some time. Further simulations (see also the next section) performed with $u=v$ indeed indicate that it is possible to design a control strategy with $u=v$ to reach $\theta$, whenever $L< L^\star$. It remains an open problem to prove it, because we stress again that the strategy developed in Section \ref{Section3} is such that $u \neq v$.

\subsection{Control in minimal time}
\label{Section4.2}

We numerically investigate the minimal time problem, which is well-posed as proved in Section \ref{Section3}, \textit{i.e.}, we consider the optimal control problem $$\text{minimize} \;\; t_f$$
over controls $u,v \in L^\infty(0,T; [0,1])$, and where $y$ solves \eqref{Model} together with $$y(t_f, \cdot) = \theta.$$ 

As before, we discretize the whole problem to estimate the minimal time and corresponding optimal strategy. All simulations of this section are conducted with  $L = 8 < L^\star$, and the same initial condition $y_0 = 0.1 \f x L + 0.8 \left(1- \f x L\right)$ as in the previous section. The corresponding results are reported in Figure \ref{MinimalTime_TwoSides}. 

For the initial condition $y_0$, we approximately find $t_f \approx 5.2$. The optimal controls are bang-bang, \textit{i.e.,}, they take only the extremal values $0$ and $1$, except near $t_f$. They are identically equal to $0$ up until $t\approx 3.2$ and then oscillate more and more rapidly. Thus, we conjecture that the optimal controls have an infinite number of switchings near the final time $t_f$, a phenomenon called \textit{chattering}. Note that the discretization parameters are here $N_x = 100$ and $N_t = 1000$, which is necessary for a good approximation of the behavior near the final time.

At the end of the phase with both controls at $0$, $y(t,\cdot)$ is not close to $0$, as evidenced by the state in green in Figure \ref{MinimalTime_TwoSides}. Thus, the second phase does not start on a stationary state since we know that the only stationary state with null boundary conditions is $0$ for $L<L^\star$. It is also does not seem that $y(t, \cdot)$ is close to some path of steady states during the chattering phase. Consequently, simulations indicate that the staircase strategy is not the minimal time strategy.

\begin{figure}[h!]
\begin{center}
\includegraphics[scale=0.58]{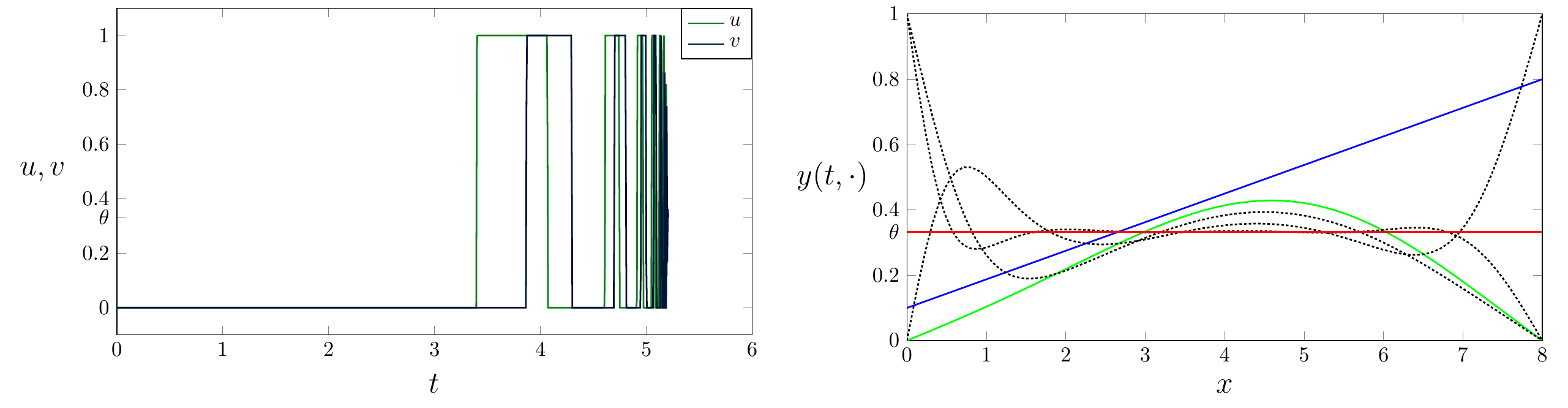}
\end{center}
\caption{Optimal controls $u$, $v$ (lower) and resulting state $y(t, \cdot)$ from time $0$ (in blue), to $t_f$ (in red) with intermediate times $0.6t_f$ (in green), $0.7t_f$, $0.8t_f$, $0.9t_f$ (in dashed line).}
\label{MinimalTime_TwoSides}
\end{figure}

As already stressed, simulations suggest that it is possible to control the system to $\theta$ with the same controls on both sides. A simulation of the minimal time problem with the same control $u$ at $x=0$ and $x=L$ is presented in Figure~\ref{MinimalTime_OneSide}. 
For this non-symmetric initial condition, the minimal time is about $5$ times the minimal time with two controls, showing that the two degrees of freedom strongly accelerate the convergence to~$\theta$. The strategy however remains the same: the control is bang-bang equal to $0$ for a long time, and then chatters around the final time. After the first phase with null control, the state is almost~$0$. The oscillations then gradually fill up the state $\theta$, starting from the middle. 

\begin{figure}[h!]
\begin{center}
\includegraphics[scale=0.58]{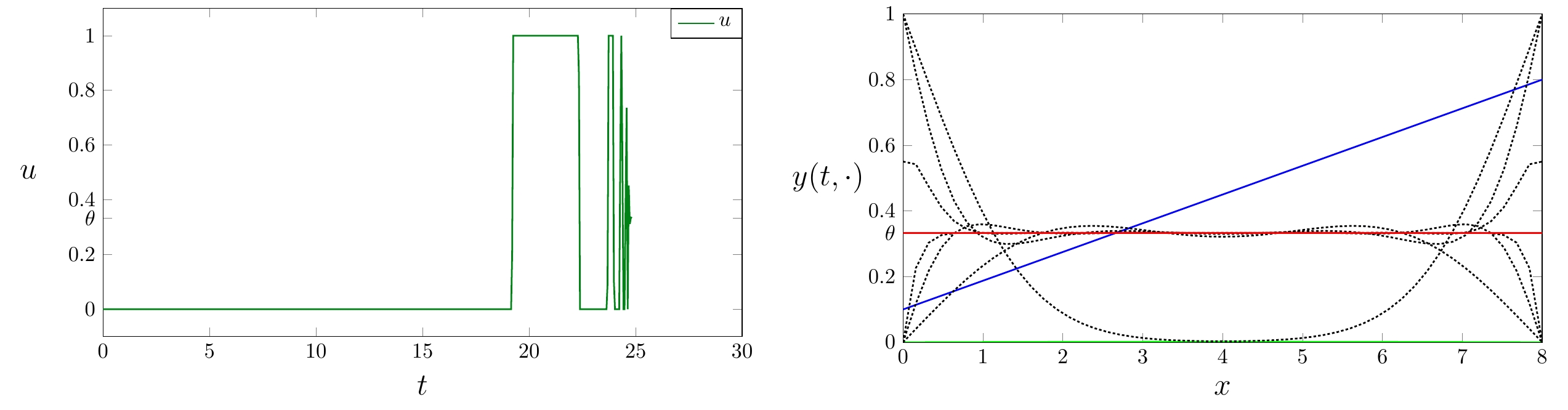}
\end{center}
\caption{Optimal control $u$ and resulting state $y(t, \cdot)$ from time $0$ (in blue), to $t_f$ (in red) with intermediate times $0.7t_f$ (in green), and several others from $0.95t_f$ to $t_f$ (in dashed line).}
\label{MinimalTime_OneSide}
\end{figure}

Finally, let us mention that for symmetric initial conditions, simulations for the minimal time problem (not shown here) exhibit symmetric controls $u(t) = v(t)$.

\subsection{Comments and perspectives}

\textbf{Fully parametrized model.} \;
Explicitly considering the diffusion parameter $\mu$, namely 
\begin{equation}
y_t  - \mu y_{xx} = f(y), 
\end{equation}
it is easily seen after change of variables that the thresholds $L^\star$, $L_\theta$ scale like $\sqrt{\mu}$. 
When exponential convergence holds as given by Lemma \ref{PropEstimate}, the rate of convergence is the first eigenvalue of the Dirichlet Laplacian and thus scales like $\mu$. 

\textbf{Other boundary conditions and steady states.} \;
As a byproduct of our analysis, we also have proved results for 
\begin{equation}
\begin{cases}
\label{ModelNeumann}
y_t  - y_{xx} = f(y), \\
y(t,0) = u(t),  \; y_x(t,L) = 0,\\
y(0) = y_0,
\end{cases}
\end{equation}
namely the system where there is only one control at $x=0$, while a Neumann boundary condition is enforced at the other end of the domain. Indeed, the same phase plane analysis shows that it is controllable towards $0$ in infinite time if and only if (putting $u(t)=0$ at the left end) $L \leq \f{L^\star}{2}$ in the monostable case ($L < \f{L^\star}{2}$ in the bistable case, respectively).

Simulations not shown here suggest that this system can be controlled to $\theta$ if $L<\f{L^\star}{2}$ in the bistable case, and it is an open problem to prove it (as our control strategy requires to act on both ends). 

Also note that our overall strategy would also work it we had Neumann controls instead of Dirichlet controls. It can also be used to reach other stationary states, while the strategy as well as possible obstacles and corresponding threshold values are all readable on the phase plane. 

\textbf{The multi-dimensional case.} \;
Understanding what happens in the previous case would be critical in view of tackling the problem in higher dimension. It is indeed natural to think of situations where the control acts only on a part of the boundary, while the rest of the boundary is endowed with Neumann conditions. For such non-homogeneous boundary conditions, stability results would be required to carry out an analysis in the spirit of ours.

If the control acts on the whole boundary, the problem of controllability towards $0$ again leads to analyzing whether only the trivial solution solves the stationary problem, because the result of Matano has been generalized~\cite{Simon1983}. Then, the threshold phenomenon is already known~\cite{Lions1982}. In this work, it is stated for 
\begin{equation*}
\begin{cases}
- \Delta y  =   \lambda f(y) \text{ in } \Omega, \\
 y = 0 \text{ on } \p \Omega
\end{cases}
\end{equation*}
where the parameter related to the domain size is $\lambda$. However, there are up to our knowledge no explicit formulae for the threshold value, although bounding like in Subsection \ref{SubsectionEst} still works.

For the control towards $1$ and in the monostable case (H1), the Lyapunov functional introduced in Remark \ref{Lyapunov} works in arbitrary dimension~\cite{Pouchol2018}. For the control towards $\theta$ in the bistable case (H2) with the static strategy of putting $\theta$ on the whole of $\p \Omega$, there is also a threshold as can be proved thanks to the above result.

\textbf{Non-local extension.} \;
A possible extension of the monostable case is to replace the classical nonlinearity $f(y) =y(1-y)$, by a non-local one, namely \[f(x,y) = y\left(1-\int_0^L K(x,z) y(z) \,dz\right)\]
where $K$ is a kernel accounting for interactions between individuals at positions $x$ and $z$ and the equation is usually called the non-local Fisher-KPP equation~\cite{Pouchol2018}. Since the corresponding stationary equation depends on $x$, the phase portrait technique does not apply and extending the controllability properties considered in this paper is a completely open problem. 

\textbf{Controllability above the thresholds.} \;
It is natural to analyze which initial conditions can still be brought to $0$ or $\theta$, when $L \geq L^\star$. Let us give a simple negative result in this direction. To answer the question, the multiplicity of stationary solutions is critical and there are many results~\cite{Lions1982}. Generally, these solutions are ordered, and thus, the maximal one will attract all initial conditions that are uniformly above it, by the comparison principle and Matano's theorem. Going more deeply would require to analyze the basin of attraction of each stationary solution.

There are also some initial data for which we can prove controllability towards  $\theta$ with our staircase strategy. Indeed, lack of controllability for $L>L^\star$ comes from the impossibility to make the first step of the proof work, namely to let any initial condition reach the state $y_{init}$ close to $0$. The second part, however, still works: $\theta$ is linked to any steady state in the region $\Gamma$, independently of $L$. Thus, for any $L>0$, we can control any of these steady states to $\theta$, as well as any initial condition close to one of them (it has to be close enough for the local controllability argument to apply).

\textbf{Open-loop or feedback control.} \; 
Our control strategy towards $\theta$ is completely constructive and open-loop since we first put low controls $\varepsilon$ on both sides up until the trajectory is close enough to $y_{init}$, and the waiting time can be taken to be independent of the initial condition, as proved in Proposition \ref{UniformTime}. The staircase phase requires controls achieving local controllability, which are also open-loop and constructive: these controls are indeed obtained by the HUM method, namely the minimization of an appropriate functional~\cite{Lions1988}. 

The strategy can also be defined in feedback form, since the first phase can be made to last up until the trajectory is close enough to $y_{init}$, while the staircase one could also be designed in feedback form, by adapting the results of~\cite{Coron2004}.

\textbf{Minimal time.} \;
For a given initial condition, theoretically estimating the minimal time for controllability towards $\theta$ and corresponding controls is an open problem in our semilinear setting, since spectral estimates specific to the linear case (used in~\cite{Loheac2017}) are not available. 

\textbf{Same controls on both sides.} \;
The controllability properties proved in the present paper require acting with different controls on both sides only for the state $\theta$, although numerical simulations suggest that these properties also hold with $u=v$. Proving it is an open problem, and an interesting question is whether this could also be done thanks to a path of steady states. 

\textbf{Path of steady states.} \; 
More generally, the staircase strategy is instrumental in our proof and the underlying path of steady states is obtained by phase plane analysis. Thus, in view of tackling problems in dimension higher than $1$, finding alternatives to the phase plane approach is a relevant open problem. 

A possible approach to develop intuition on possible paths of steady states is to build an optimal control problem forcing the system to remain close to stationary states, for example by adding a constraint like
\[| \Delta y + f(y)| \leq \e, \] where $\e$ is small. However, this constraint alone would be too restrictive since there must be a first phase which consists in reaching some steady state on the path. These two requirements make the construction of such optimal control problems highly non-trivial.

\paragraph{Acknowledgment.}
The authors would like to thank Martin Strugarek and Grégoire Nadin for fruitful discussions on bistable equations and maximum principles. 
\newline
This research was supported by the Advanced Grant DyCon (Dynamical Control) of the European Research Council Executive Agency (ERC), the MTM2014-52347 and MTM2017-92996 Grants of the MINECO (Spain), the ICON project of the French ANR-16-ACHN-0014 and the AFOSR Grant FA9550-18-1-0242 "Nonlocal PDEs: Analysis, Control and Beyond".
{
\bibliography{ControlPhasePortrait.bib}
\bibliographystyle{acm}}

\end{document}